\begin{document}

\newtheorem{tm}{Theorem}[section]
\newtheorem{prop}[tm]{Proposition}
\newtheorem{defin}[tm]{Definition}
\newtheorem{coro}[tm]{Corollary}
\newtheorem{lem}[tm]{Lemma}
\newtheorem{assumption}[tm]{Assumption}
\newtheorem{rk}[tm]{Remark}
\newtheorem{nota}[tm]{Notation}
\numberwithin{equation}{section}

\newcommand{\stk}[2]{\stackrel{#1}{#2}}
\newcommand{\dwn}[1]{{\scriptstyle #1}\downarrow}
\newcommand{\upa}[1]{{\scriptstyle #1}\uparrow}
\newcommand{\nea}[1]{{\scriptstyle #1}\nearrow}
\newcommand{\sea}[1]{\searrow {\scriptstyle #1}}
\newcommand{\csti}[3]{(#1+1) (#2)^{1/ (#1+1)} (#1)^{- #1
 / (#1+1)} (#3)^{ #1 / (#1 +1)}}
\newcommand{\RR}[1]{\mathbb{#1}}

\newcommand{\rd}{{\mathbb R^d}}
\newcommand{\ep}{\varepsilon}
\newcommand{\rr}{{\mathbb R}}
\newcommand{\alert}[1]{\fbox{#1}}
\newcommand{\eqd}{\sim}
\def\p{\partial}
\def\R{{\mathbb R}}
\def\N{{\mathbb N}}
\def\Q{{\mathbb Q}}
\def\C{{\mathbb C}}
\def\l{{\langle}}
\def\r{\rangle}
\def\t{\tau}
\def\k{\kappa}
\def\a{\alpha}
\def\la{\lambda}
\def\De{\Delta}
\def\de{\delta}
\def\ga{\gamma}
\def\Ga{\Gamma}
\def\ep{\varepsilon}
\def\eps{\varepsilon}
\def\si{\sigma}
\def\Re {{\rm Re}\,}
\def\Im {{\rm Im}\,}
\def\E{{\mathbb E}}
\def\P{{\mathbb P}}
\def\Z{{\mathbb Z}}
\def\D{{\mathbb D}}
\newcommand{\ceil}[1]{\lceil{#1}\rceil}

\title{Spreading speeds of a parabolic-parabolic chemotaxis model with  logistic source on $\mathbb{R}^{N}$}

\author{
Wenxian Shen and Shuwen Xue \\
Department of Mathematics and Statistics\\
Auburn University, Auburn, AL 36849,
U.S.A.\\ 
$\quad$\\
Dedicated to Professor Georg Hetzer on the occasion of his  75th Birthday }

\date{}
\maketitle

\begin{abstract}
The current paper is concerned with the spreading speeds of the following parabolic-parabolic chemotaxis model with logistic source on $\R^{N}$,
\begin{equation}\label{Abstract-Eq}
\begin{cases}
u_{t}=\Delta u - \chi\nabla\cdot(u\nabla v)+ u(a-bu),\quad  x\in\R^N, \\
{v_t}=\Delta v-\lambda v+\mu u,\quad x\in\R^N,
\end{cases}
\end{equation}
where $\chi, \ a,\  b,\ \lambda,\ \mu$ are positive constants. Assume $b>\frac{N\mu\chi}{4}$.  Among others, it is proved that $2\sqrt{a}$ is the spreading speed of the global classical solutions of \eqref{Abstract-Eq} with nonempty compactly supported initial functions, that is,
$$
\lim_{t\to\infty}\sup_{|x|\geq ct}u(x,t;u_0,v_0)=0\quad \forall\,\, c>2\sqrt{a}
$$
and
$$
\liminf_{t\to\infty}\inf_{|x|\leq ct}u(x,t;u_0,v_0)>0 \quad \forall\,\, 0<c<2\sqrt{a}.
$$
where $(u(x,t;u_0,v_0), v(x,t;u_0,v_0))$ is the unique global classical solution of \eqref{Abstract-Eq} with $u(x,0;u_0,v_0)=u_0$,
$v(x,0;u_0,v_0)=v_0$, and ${\rm supp}(u_0)$, ${\rm supp}(v_0)$ are nonempty and compact. It is well known that $2\sqrt{a}$ is the spreading speed of the following Fisher-KPP equation,
$$
u_t=\Delta u+u(a-bu),\quad \forall\,\ x\in\R^N.
$$
Hence, if $b>\frac{N\mu\chi}{4}$, the chemotaxis neither speeds up nor slows down the spatial spreading in the Fisher-KPP equation.

\end{abstract}

\medskip
\noindent{\bf Key words.} Parabolic-parabolic chemotaxis system, logistic source, classical solution, spreading speeds.

\medskip
\noindent {\bf 2020 Mathematics Subject Classification.}  35B40, 35K57, 35Q92, 92C17.

\section{Introduction and the Statements of Main results}

Chemotaxis is referred to the directed movement of cells and organisms in response to chemical gradients  and plays a crucial role in a wide range of biological phenomena \cite{MEis}. Positive chemotaxis occurs if the movement is toward a higher concentration of the chemical substance in question. Conversely, negative chemotaxis occurs if the movement is in the opposite direction. The chemical substances that lead to positive chemotaxis are called chemoattractants and those leading to negative chemotaxis are called chemorepellents.
Mathematical models for chemotaxis date to the pioneering works of Keller and Segel in the 1970s  \cite{KeSe1, KeSe2}.The reader is referred to \cite{BBTW, HiPa, Hor, KJPainter} and the references therein for some detailed introduction into the mathematics and applications of chemotaxis models. For the recent developments on chemotaxis models, we refer to the survey paper \cite{ArTy}.

The current paper is devoted to the study of the spatial spreading dynamics of the following parabolic-parabolic chemotaxis model with logistic source on
 $\R^N$:
\begin{equation}\label{Main-Eq}
\begin{cases}
u_{t}=\Delta u - \chi\nabla \cdot (u\nabla v)+ u(a-bu),\quad  x\in\R^N,\\
{v_t}=\Delta v -\lambda v+\mu u,\quad x\in\R^N,
\end{cases}
\end{equation}
where $\chi$, $a$, $b$, $\lambda$ and $\mu$ are positive constants. In \eqref{Main-Eq}, $u(x, t)$and $v(x, t)$ denote the population densities of some biological species and chemical substance at location $x$ and time $t$, respectively; the term $\Delta u$ describes the movement of the biological species  following random walk; the term $ \chi\nabla \cdot (u\nabla v)$ characterizes the influence of chemical substance, and the logistic term $u(a-bu)$ governs the local dynamics of the biological species. The second equation indicates that the chemical substance diffuses via random walk with a finite diffusion rate and is produced over time by the biological species.
Both mathematically and biologically, it is important to investigate how chemotaxis affects the dynamics of \eqref{Main-Eq}.

Numerous  research works  have been carried out on the dynamics of the following  counterpart of \eqref{Main-Eq} on a bounded domain with Neumann boundary condition,
\begin{equation}\label{Main-Eq-1}
\begin{cases}
u_{t}=\Delta u - \chi\nabla \cdot (u\nabla v)+ u(a-bu),\quad  x\in \Omega,\\
v_t=\Delta v -\lambda v+\mu u,\quad x\in\Omega,\\
\frac{\p u}{\p n}=\frac{\p v}{\p n}=0,\quad x\in\p \Omega,
\end{cases}
\end{equation}
where $\Omega\subset \R^N$ is a bounded smooth domain (see \cite{DiHo, HeVe1, IsSh2, kuto_PHYSD, lankeit_eventual, LiMu, win_CPDE2010, win_JMPA, win_JDE2014, ZhLiBaZo}, etc.).  For example, when $a\equiv b \equiv0$ in \eqref{Main-Eq-1},  and $\Omega$ is a ball in $\R^N$ with $N\geq 3$, it is proved that for any $M>0$ there exists positive initial data $(u_0,v_0)\in C(\bar\Omega)\times W^{1,\infty}(\Omega)$ with $\int_{\Omega}u_0=M$ such that the corresponding solution blows up in finite time
(see \cite{win_JMPA}). It is shown in \cite{win_JDE2014} that, when $\Omega$ is a convex bounded domain with smooth boundary and $\frac{b}{\chi}$ is sufficiently large, for any choice of suitably regular nonnegative initial data $(u_0, v_0)$ such that $u_0\not\equiv 0$, \eqref{Main-Eq-1} possesses a uniquely determined global classical solution and that the constant solution $(\frac{a}{b},\frac{\mu a}{\lambda b})$ is asymptotically stable in the sense that
 \[
 \lim_{t\to\infty} \big[ \|u(\cdot,t;u_0,v_0)-\frac{a}{b}\|_{L^\infty(\Omega)}+\|v(\cdot,t;u_0,v_0)-\frac{\mu a}{\lambda b}\|_{L^\infty(\Omega)}\big]=0.
 \]
The particular requirement on the convexity of the bounded domain $\Omega$ was later removed in \cite{IsSh2} and \cite{ZhLiBaZo}. Hence finite-time blow-up phenomena in \eqref{Main-Eq-1} can be suppressed to some extent by the logistic source.

There are also numerous  research papers on the dynamics of \eqref{Main-Eq} in the case that $a\equiv b \equiv0$ and many interesting dynamical scenarios are observed
(see \cite{CsPj, CoEsma, FePr, Mi, NtSrUm, NtYt, NaSuYo} etc.). For example, it is observed that finite-time blow-up may occur when $N\ge 2$
(see  \cite{CsPj}) and  it is shown that bounded solutions decay to zero as time goes to infinity and behaves like the heat kernel with the self-similarity (see \cite{NtSrUm, NtYt}).

 Very recently, the authors of current paper \cite{ShXu} studied the dynamics of \eqref{Main-Eq} with $a,b>0$  and observed that
finite-time blow-up phenomena in \eqref{Main-Eq} can also be suppressed to some extent by the logistic source.
To be more precise,  it is proved in   \cite{ShXu}  that if $b>\frac{N\mu\chi}{4}$, \eqref{Main-Eq} has a unique bounded global classical solution for every nonnegative, bounded, and uniformly continuous function $u_0(x)$, and every nonnegative, bounded, uniformly continuous, and differentiable function $v_0(x)$. Moreover, any globally defined bounded positive classical solution with strictly positive initial function $u_0$ is bounded below by a positive constant independent of $(u_0, v_0)$ when time is large.

The objective of the current paper is to investigate the spreading speeds of globally defined  classical solutions of \eqref{Main-Eq} with compactly supported or front-like initial functions.   Roughly speaking,
it is about how fast the biological species spreads into the region without biological species initially as time evolves.

Observe that, in the absence of chemotaxis (i.e. $\chi= 0$), \eqref{Main-Eq} reduces to the following reaction-diffusion equation
\begin{equation}\label{fisher-kpp}
u_t=\Delta u+u(a-bu), \quad x\in\R^N.
\end{equation}
Due to the pioneering works of Fisher \cite{Fisher} and Kolmogorov, Petrowsky, Piskunov \cite{KPP} on traveling wave solutions and take-over properties of \eqref{fisher-kpp}, \eqref{fisher-kpp} is also referred to as  the Fisher-KPP equation.
The following results are well known about the spatial spreading dynamics of \eqref{fisher-kpp}.
Equation \eqref{fisher-kpp} has  traveling wave solutions $u(t,x)=\phi(x\cdot\xi-ct)$ ($\xi\in S^{N-1}$)
connecting $\frac{a}{b}$ and $0$ $(\phi(-\infty)=\frac{a}{b},\phi(\infty)=0)$ of all speeds $c\geq 2\sqrt a$ and has no such traveling wave solutions of slower speeds.  For any given bounded $u_0\in C(\R^N, \R^{+})$ with $\liminf_{x\cdot\xi\to-\infty}u_0(x)>0$ and $u_0(x)=0$ for $x\cdot\xi\gg 1$,
$$
\lim_{t\to\infty}\sup_{x\cdot\xi \ge ct}u(x, t)=0 \quad \forall \, c>2\sqrt a
$$
and
$$\lim_{t\to \infty}\sup_{x\cdot\xi \le ct}|u(x, t)-\frac{a}{b}|=0\quad \forall\,  c<2\sqrt a.
$$
Since their pioneering works,  a considerable amount of research has been carried out toward the front propagation dynamics of
 reaction-diffusion equations of the form,
\begin{equation}
\label{general-fisher-eq}
u_t=\Delta u+u f(x, t, u),\quad x\in\R^N,
\end{equation}
where $f(x, t, u)<0$ for $u\gg 1$,  $\partial_u f(x, t, u)<0$ for $u\ge 0$ (see \cite{ArWe2, Berestycki1, LiZh1, She1, Wei2}, etc.).
In literature, the number $c_0^*:=2\sqrt a$ is called the {\it
spreading speed} for \eqref{fisher-kpp} which was first introduced by Aronson and Weinberger \cite{ArWe1}.

It is interesting  to investigate the influence of chemotaxis on the spreading dynamics of \eqref{Main-Eq}.   The authors of  \cite{SaSh3}, \cite{SaSh4}
studied  the existence of traveling wave solutions of  \eqref{Main-Eq}.
Among others, it is proved that if $b>2\chi\mu$ and $1 \geq \frac{1}{2}(1-\frac{\lambda}{a})_{+} ,$ then for every $c\ge 2\sqrt{a}$, \eqref{Main-Eq} has a traveling wave solution $(u,v)(t,x)=(U^{c}(x\cdot\xi-ct),V^{c}(x\cdot\xi-ct))$ ($\forall\, \xi\in S^{N-1}$)
connecting the two constant steady states $(0,0)$ and $(\frac{a}{b},\frac{\mu}{\lambda}\frac{a}{b})$,
 and there is no such solution with speed $c$ less than $2\sqrt{a}$, which shows that \eqref{Main-Eq} has a  minimal wave speed $c_0^*=2\sqrt a$, which is independent of the chemotaxis.

In this paper, we will prove that,  if $b>\frac{N\mu\chi}{4}$, then $c_0^*=2\sqrt a$ is also the spatial spreading speed of \eqref{Main-Eq},
which shows that  the chemotaxis neither speeds up nor slows down the spatial spreading in the Fisher-KPP equation \eqref{fisher-kpp}. To state our results precisely, we introduce some notations.
Let
\begin{equation*}
\label{unif-cont-space}
X_1=C_{\rm unif}^b(\R^N):=\{u\in C(\R^N)\,|\, u(x)\,\,\text{is uniformly continuous in}\,\, x\in\R^N\,\, {\rm and}\,\, \sup_{x\in\R^N}|u(x)|<\infty\}
\end{equation*}
equipped with the norm $\|u\|_\infty=\sup_{x\in\R^N}|u(x)|$, and
$$
X_2=C_{\rm unif}^{b,1}:=\{u\in C_{\rm unif}^b(\R^N)\,|\, \p_{x_i}u\in C_{\rm unif}^b(\R^N),\,\, i=1,2,\cdots, N\}
$$
equipped with the norm $\|u\|_{C_{\rm unif}^{b, 1}}=\|u\|_\infty+\sum_{i=1}^N \|\p_{x_i}u\|_\infty$.
Let
$$
X_1^+=\{u\in X_1\,|\, u\ge 0\},\quad X_2^+=\{v\in X_2\,|\, v\ge 0\}.
$$

For any given $(u_0, v_0)\in X_{1}^{+}\times X_{2}^{+}$, we denote by $(u(x, t ;u_0, v_0),v(x, t ; u_0, v_0))$ the classical solution of \eqref{Main-Eq} satisfying $u(x, 0;u_0, v_0)=u_0(x)$ and $v(x, 0;u_0, v_0)=v_0(x)$ for every $x\in\R^N$. Note that, by the comparison principle for parabolic equations, for every $(u_0, v_0)\in X_{1}^{+}\times X_{2}^{+}$, it always holds that $u(x, t ;u_0,v_0)\geq 0$ and $v(x, t;u_0, v_0)\geq 0$ whenever $(u(x, t;u_0, v_0),v(x, t;u_0, v_0))$ is defined. In this work we shall only focus on nonnegative classical solutions of \eqref{Main-Eq}  since both functions $u(x, t)$ and $v(x, t)$ represent density functions.

The following proposition states the existence and uniqueness of global classical solutions of \eqref{Main-Eq} with non-negative initial function. It has been proved in \cite[Theorem 1.2]{ShXu}.

\begin{prop}
Suppose that $b>\frac{N\mu\chi}{4}$. Then for every $(u_0, v_0)\in X_{1}^{+}\times X_{2}^{+}$,  \eqref{Main-Eq} has a unique bounded global classical solution $(u(x,t;u_0, v_0)$, $v(x,t;u_0, v_0))$.
\end{prop}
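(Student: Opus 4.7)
The plan is to follow the standard two-step program for chemotaxis systems with damping logistic source: first establish local well-posedness by semigroup methods, then upgrade to global existence via an a priori $L^{\infty}$ bound on $u$ in which the chemotactic cross-dissipation is controlled by the logistic term. The dimensional constraint $b>\frac{N\mu\chi}{4}$ should enter only at the last step, when one optimizes over the exponent in an $L^{p}$ test.

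First I would set up local existence. Treating the heat semigroup $e^{t\Delta}$ as the leading part on $X_{1}$ and on $X_{2}$, and writing $-\chi\nabla\cdot(u\nabla v)$ and $u(a-bu)$ as nonlinear perturbations through the variation-of-constants formula, a contraction argument on $C([0,T];X_{1}^{+}\times X_{2}^{+})$ with $T$ small produces a unique mild solution on a maximal interval $[0,T_{\max})$, together with the standard extensibility alternative: $T_{\max}=\infty$ unless $\limsup_{t\uparrow T_{\max}}(\|u(\cdot,t)\|_{\infty}+\|v(\cdot,t)\|_{C_{\rm unif}^{b,1}})=\infty$. Parabolic bootstrapping upgrades the mild solution to a classical one on $[0,T_{\max})$, and standard comparison arguments together with $u_0,v_0\geq 0$ yield nonnegativity.

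The heart of the proof is the a priori $L^{\infty}$ bound. I would test the $u$-equation against $u^{p-1}\phi_{R}$, where $\phi_{R}$ is a smooth cutoff adapted to $\R^{N}$, and use the key rewriting
\begin{equation*}
\chi(p-1)\int u^{p-1}\nabla u\cdot\nabla v\,\phi_{R}=-\frac{\chi(p-1)}{p}\int u^{p}\Delta v\,\phi_{R}+\text{cutoff terms},
\end{equation*}
followed by the substitution $\Delta v=v_{t}+\lambda v-\mu u$ coming from the second equation of \eqref{Main-Eq}. This converts the chemotactic contribution into the critical algebraic term $\bigl(\frac{\chi(p-1)\mu}{p}-b\bigr)\int u^{p+1}\phi_{R}$ plus coupling terms $\int u^{p}v_{t}\phi_{R}$ and $\int u^{p}v\,\phi_{R}$. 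A Gagliardo--Nirenberg interpolation on $\R^{N}$ between $\int|\nabla u^{p/2}|^{2}$, $\int u^{p+1}$, and norms of $v$ (whose Sobolev regularity is bootstrapped from the $v$-equation by heat semigroup smoothing) converts the remaining terms into quantities absorbable by the diffusive dissipation and the logistic term; after optimizing the exponent one recovers the sharp threshold $b>\frac{N\mu\chi}{4}$. A Moser--Alikakos iteration in $p$ then upgrades the resulting uniform-in-$p$ $L^{p}$ bound into an $L^{\infty}$ bound independent of $T<T_{\max}$.

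The main obstacle is the $\int u^{p}v_{t}\phi_{R}$ term produced by the parabolic-parabolic coupling, which is absent in the parabolic-elliptic reduction. On a bounded domain one can absorb it into a Lyapunov-type functional combining $u$ and $v$ via Poincaré; on $\R^{N}$ this absorption is harder because no such Poincaré inequality is available, and one is forced either to run localized $L^{p}$ estimates on translates $B_{R}(x_{0})$ and take a supremum over $x_{0}\in\R^{N}$ (matching the uniform-continuity class of the data), or to pair the $L^{p}$ estimate for $u$ with a companion energy inequality for $\|\nabla v(\cdot,t)\|_{L^{q}}$ obtained from heat-semigroup smoothing. Once the $L^{\infty}$ bound on $u$ is in hand, standard parabolic regularity applied to the $v$-equation yields bounds on $v$ in $C_{\rm unif}^{b,1}$, the extensibility criterion then forces $T_{\max}=\infty$, and the solution is global and bounded as claimed.
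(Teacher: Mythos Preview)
The paper does not prove this proposition directly; it invokes \cite[Theorem~1.2]{ShXu}. The key idea from that reference is, however, visible in Lemma~\ref{upper-bound-lm1} of the present paper: one sets $w=u+\frac{\chi}{2\mu}|\nabla v|^{2}$ and computes, using the Bochner identity $\Delta|\nabla v|^{2}=2|D^{2}v|^{2}+2\nabla v\cdot\nabla\Delta v$ together with the $v$-equation, that the drift term $-\chi\nabla u\cdot\nabla v$ coming from the $u$-equation cancels exactly against the one produced by $\partial_t|\nabla v|^{2}$. The only remaining dangerous term is $-\chi u\Delta v$, which is estimated pointwise via Young's inequality and $|\Delta v|^{2}\le N|D^{2}v|^{2}$ by $\frac{\chi}{\mu}|D^{2}v|^{2}+\frac{N\mu\chi}{4}u^{2}$, giving
\[
w_t\le \Delta w-\Big(b-\frac{N\mu\chi}{4}\Big)u^{2}+au-\frac{\chi\lambda}{\mu}|\nabla v|^{2}.
\]
Global existence and uniform boundedness of both $u$ and $|\nabla v|$ then follow by scalar comparison on $\R^{N}$, with no cutoffs, no Poincar\'e inequality, and no Moser iteration. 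The constant $\frac{N}{4}$ is precisely the one coming from the pointwise trace bound $|\Delta v|^{2}\le N|D^{2}v|^{2}$.

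Your $L^{p}$-testing route has a concrete gap at the step where you assert that one ``recovers the sharp threshold $b>\frac{N\mu\chi}{4}$''. Substituting $\Delta v=v_t+\lambda v-\mu u$ into your integral identity produces the coefficient $\frac{(p-1)\chi\mu}{p}$ in front of $\int u^{p+1}\phi_{R}$; this tends to $\chi\mu$ as $p\to\infty$, so the threshold that a direct Moser iteration actually delivers is $b>\chi\mu$, not $b>\frac{N\mu\chi}{4}$. You claim Gagliardo--Nirenberg interpolation repairs this but give no mechanism, and there is none visible: the factor $\frac{N}{4}$ is not an interpolation exponent but the pointwise trace constant above, which your integral scheme never sees. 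In addition, the coupling term $\int u^{p}v_{t}\phi_{R}$ that you correctly flag as the main obstacle has no clean absorption on $\R^{N}$ via either of the two devices you propose; the pointwise $w$-approach sidesteps this entirely because $v_t$ is eliminated before any estimation takes place.
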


For given $x=(x_1, x_2, \cdot \cdot \cdot, x_N)\in \R^N$, let $|x|=\sqrt{x_1^2+x_2^2+\cdot \cdot \cdot+x_{N}^2}$.
Let
$$
S^{N-1}=\{x \in\R^N\,|\, |x|=1\}.
$$
For $x=(x_1, x_2, \cdot \cdot \cdot, x_N)\in\R^N$, $y=(y_1, y_2, \cdot \cdot \cdot, y_N)\in \R^N$, define $x\cdot y=\sum_{i=1}^{N}x_{i}y_{i}$.

Let
$$
C_{cp}^+=\{u\in X_1^+\,|\,\,\,  {\rm supp}(u)\,\,\, \text{is non-empty and compact}\},
$$
and
$$
C_{cp}^{+,1}=\{v\in X_2^+\,|\, \,\, {\rm supp}(v)\,\,\, \text{is non-empty and compact}\}.
$$
For any given $\xi\in S^{N-1}$, we define
$$
C_{fl}^+(\xi)=\{u\in X_1^+\,\,|\, \, \liminf_{x\cdot\xi\to -\infty}u(x)>0,\,\, u(x)=0\,\, {\rm for}\,\, x\in\R^N\,\ {\rm with}\,\ x\cdot\xi\gg 1\},
$$
$$
C_{fl}^{+,1}(\xi)=\{v\in X_2^+\,\,|\,\, \liminf_{x\cdot\xi\to -\infty}v(x)>0,\,\, v(x)=0\,\, {\rm for}\,\, x\in\R^N\,\ {\rm with}\,\ x\cdot\xi\gg 1\},
$$
$$
C^+(\xi)=\{u\in X_1^+\,|\, \inf_{|x\cdot\xi|<r}u(x)>0\,\ \text{for some}\,\ r>0,\,\,u(x)=0\,\, {\rm for}\,\, x\in\R^N\,\ {\rm with}\,\ |x\cdot\xi|\gg 1\},
$$
and
$$
C^{+,1}(\xi)=\{v\in  X_2^+\,|\, \inf_{|x\cdot\xi|<r}v(x)>0\,\ \text{for some}\,\ r>0,\,\,v(x)=0\,\, {\rm for}\,\, x\in\R^N\,\ {\rm with}\,\ |x\cdot\xi|\gg 1\}.
$$

The main results of this paper are then stated in the following theorems.

\begin{tm}\label{spreading-thm}
Suppose that $b>\frac{N\mu\chi}{4}$. For any $(u_{0}, v_{0})\in C_{cp}^{+}\times C_{cp}^{+,1}$, the following hold.
 \begin{itemize}
 \item[(1)]
 For any $0<\epsilon<\sqrt{a}$,
$$
\liminf_{t\to\infty}\inf_{|x|\leq (2\sqrt{a}-\epsilon)t}u(x,t;u_0,v_0)>0,
$$
and
$$
\liminf_{t\to\infty}\inf_{|x|\leq (2\sqrt{a}-\epsilon)t}v(x,t;u_0,v_0)>0.
$$

\item[(2)] For any $\varepsilon>0$,
$$
\lim_{t\to\infty}\sup_{|x|\geq (2\sqrt{a}+\epsilon)t}u(x,t;u_0,v_0)=0,
$$
and
$$
\lim_{t\to\infty}\sup_{|x|\geq (2\sqrt{a}+\epsilon)t}v(x,t;u_0,v_0)=0.
$$
\end{itemize}
\end{tm}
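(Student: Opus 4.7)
The proof naturally splits into the upper bound (Part 2) and the lower bound (Part 1). In both, the conclusion on $v$ follows from the conclusion on $u$ via the Duhamel representation
\begin{equation*}
v(x,t) = e^{-\lambda t}(e^{t\Delta}v_0)(x) + \mu\int_0^t e^{-\lambda(t-s)}\bigl(e^{(t-s)\Delta}u(\cdot,s)\bigr)(x)\,ds,
\end{equation*}
so the main work concerns $u$. The starting point is the preceding proposition together with standard parabolic regularity applied to the $v$-equation, which, since $u$ is uniformly bounded in $L^\infty$, yields uniform $L^\infty$ bounds on $\nabla v$, $\Delta v$ and $v_t$ for $t\geq 1$. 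Substituting $\Delta v = v_t + \lambda v - \mu u$ into the $u$-equation recasts it as
\begin{equation*}
u_t = \Delta u - \chi\,\nabla v\cdot\nabla u + u\bigl[a - \chi\lambda v - (b-\chi\mu)u - \chi v_t\bigr],
\end{equation*}
which has the structure of a drift-diffusion Fisher-KPP equation with time-dependent, $v$-mediated reaction.

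For Part (2), I would build radial super-solutions of the form $\bar u(x,t) = A\exp\bigl(-\alpha(|x|-ct)\bigr)$ with $c > 2\sqrt{a}$ and $\alpha\in(0,\sqrt{a})$ chosen so that $c\alpha > \alpha^2 + a$. Direct substitution leaves a residual of the form $\chi|\nabla v|\alpha + \chi|v_t| + \chi\lambda v$; bounding these by their uniform $L^\infty$ norms would inflate the effective growth rate and spoil the sharp constant $2\sqrt{a}$. The key to sharpness is a bootstrap on the exponential tail: starting from the coarse bound $u(x,t) \leq C e^{Kt}$ available from comparison with the heat equation plus constant reaction, the Duhamel formula (and its $\nabla$ and $\partial_t$ counterparts) shows that $v$, $\nabla v$ and $v_t$ inherit the exponential tail of $u$ ahead of the front. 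Feeding this back into the super-solution inequality tightens the tail of $u$, which in turn tightens the tails of $v$; iterating drives the admissible wave speed down to any value $c > 2\sqrt{a}$. A final comparison then yields Part (2) for $u$, after which the Duhamel formula gives Part (2) for $v$.

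For Part (1), I would first invoke the uniform positive lower bound for $u(\cdot,t)$ on compact sets established in \cite{ShXu} (recalled in the introduction), providing a positive ``seed'' at arbitrarily large times. To propagate this seed outward at any rate $c_0 < 2\sqrt{a}$, I would construct a compactly supported sub-solution of Fisher-KPP type: for $\eta > 0$ small and $R$ large, let $\phi_R > 0$ be the principal Dirichlet eigenfunction of $-\Delta$ on the ball $B_R$, with eigenvalue $\la_R < \eta$; for $\kappa > 0$ small and any direction $\xi\in S^{N-1}$, check that a suitable profile supported in a moving ball $B_R(c_0 t\xi)$ (for instance a shifted product of $\phi_R$ with an exponential factor $e^{-\alpha(x\cdot\xi - c_0 t)}$) is a sub-solution of the $u$-equation, the chemotaxis contribution $-\chi\nabla v\cdot\nabla u - \chi u\Delta v$ being absorbed by the positivity margin $a - \la_R - O(\chi)$ obtained from the preliminary bounds on $v$, $\nabla v$, $\Delta v$, $v_t$ combined with the smallness of $\kappa$ and $\eta$. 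Comparison with this sub-solution, seeded by the lower bound on compact sets and swept over $\xi\in S^{N-1}$, gives Part (1) for $u$; Part (1) for $v$ follows once more from the Duhamel formula.

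The main obstacle throughout is the chemotaxis drift $-\chi\nabla v\cdot\nabla u$: crude $L^\infty$ estimates on $\nabla v$ inflate the effective spreading rate in Part (2) and endanger the positivity of the sub-solution margin in Part (1). The Duhamel bootstrap in Part (2) and the large-ball Dirichlet-eigenvalue construction in Part (1), combined with the parabolic regularity bounds from the preliminary step, are the devices that absorb this drift and recover the sharp Fisher-KPP constant $2\sqrt{a}$.
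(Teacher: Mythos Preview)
Your outline diverges from the paper in both halves, and in Part~(1) there is a real gap.

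\textbf{Part (2).} The paper does not bootstrap. Its key device is the auxiliary quantity $w=u+\frac{\chi}{2\mu}|\nabla v|^2$, for which a direct computation (using $b>\frac{N\mu\chi}{4}$) gives the clean inequality $w_t\le \Delta w+aw$. Since $(u_0,v_0)$ has compact support, $w(\cdot,0)$ is dominated by $M e^{-k x\cdot\xi}$ for every $\xi\in S^{N-1}$ and every $0<k<\sqrt a$, and comparison with the traveling exponential $M e^{-k(x\cdot\xi-ct)}$, $c=k+a/k$, gives the sharp upper spreading speed in one step; the bound for $v$ follows by a single comparison for the linear $v$-equation. Your iterative tail-tightening scheme is plausible in spirit but you have not shown that the sequence of admissible speeds actually converges to $2\sqrt a$ rather than to some larger limit; the $w$-trick avoids this issue entirely.

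\textbf{Part (1).} Here your sub-solution plan does not close as written. You propose to absorb the chemotaxis terms into the positivity margin ``$a-\lambda_R-O(\chi)$'' using the uniform $L^\infty$ bounds on $v,\nabla v,\Delta v,v_t$. But those preliminary bounds give only \emph{boundedness}, not smallness: in your recast equation the effective reaction is $a-\chi\lambda v-\chi v_t$ and the drift is $-\chi\nabla v\cdot\nabla u$, with $|v|,|v_t|,|\nabla v|\le M$ for some $M$ that the hypothesis $b>\frac{N\mu\chi}{4}$ does not force to be small relative to $a/\chi$. With only these estimates the sub-solution argument yields a lower spreading speed of the form $2\sqrt{a-O(\chi M)}-O(\chi M)$, not $2\sqrt a$. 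The paper supplies the missing ingredient: a local smallness lemma showing that if $u$ stays below a small threshold $\eta$ on a large ball for a time interval, then $v$, $\nabla v$ and the second derivatives of $v$ are also $O(\eta)$ on a concentric smaller ball after a waiting time. This is what makes the equation look like Fisher--KPP with coefficients arbitrarily close to the unperturbed ones \emph{in the region where the eigenfunction sub-solution lives}, and it is combined with a dichotomy argument (either $\sup u$ on the moving ball exceeds $\eta$, giving a lower bound by a Harnack-type lemma, or it stays below $\eta$, in which case the eigenfunction grows exponentially and forces $\sup u$ back above $\eta$). Your parameters $\kappa,\eta$ do not play this role as stated; you need the step ``$u$ small locally $\Rightarrow$ $v$-terms small locally'' before the eigenfunction comparison can recover the sharp constant. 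Also note that the persistence result you cite from \cite{ShXu} is stated for strictly positive $u_0$, not for compactly supported data, so the ``seed'' has to be produced by the argument itself rather than imported.
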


\begin{tm}\label{spreading-thm-1}
Suppose that $b>\frac{N\mu\chi}{4}$. For any given $\xi\in S^{N-1}$ and $(u_{0}, v_0)\in C_{fl}^+(\xi) \times C_{fl}^{+,1}(\xi)$, the following hold.
 \begin{itemize}
 \item[(1)]
 For any $0<\epsilon<\sqrt{a}$,
$$
\liminf_{t\to\infty}\inf_{x\cdot\xi\leq (2\sqrt{a}-\epsilon)t}u(x,t;u_0,v_0)>0.
$$
and
$$
\liminf_{t\to\infty}\inf_{x\cdot\xi\leq (2\sqrt{a}-\epsilon)t}v(x,t;u_0,v_0)>0.
$$

\item[(2)] For any $\epsilon>0$,
$$
\lim_{t\to\infty}\sup_{x\cdot\xi\geq (2\sqrt{a}+\epsilon)t}u(x,t;u_0,v_0)=0,
$$
and
$$
\lim_{t\to\infty}\sup_{x\cdot\xi\geq (2\sqrt{a}+\epsilon)t}v(x,t;u_0,v_0)=0.
$$
\end{itemize}
\end{tm}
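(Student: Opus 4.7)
The plan is to reduce Theorem \ref{spreading-thm-1} to Theorem \ref{spreading-thm} by exploiting the front-like structure of the initial data together with the translation invariance of the system. Throughout I would rely on the global $L^\infty$ and gradient estimates from \cite{ShXu}, valid because $b>\frac{N\mu\chi}{4}$, which control the chemotactic drift $-\chi\nabla v$ uniformly in space and time and thereby reduce it to a bounded perturbation of the Fisher-KPP dynamics.

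For part~(1), the assumption $\liminf_{x\cdot\xi\to -\infty}u_0(x)>0$ yields constants $\delta_0,M_0,r_0>0$ such that $u_0\geq \delta_0$ on every ball $B(y,r_0)$ with $y\cdot\xi\leq -M_0-r_0$. For each such $y$, I would sandwich a smooth compactly supported pair $(\phi_y,\psi_y)$ with $(0,0)\leq(\phi_y,\psi_y)\leq(u_0,v_0)$ and apply Theorem \ref{spreading-thm}(1) to the solution of \eqref{Main-Eq} emanating from $(\phi_y,\psi_y)$. This produces a positive lower bound on $u(x,t;\phi_y,\psi_y)$ on $\{|x-y|\leq (2\sqrt a-\tfrac{\epsilon}{2})t\}$ for all $t$ large, uniform in $y$. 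A sub/supersolution argument tailored to the coupled system --- substituting for the missing comparison principle --- transfers this bound to $u(x,t;u_0,v_0)$, and sliding $y$ along $-\xi$ covers the full half-space $\{x\cdot\xi\leq(2\sqrt a-\epsilon)t\}$. The corresponding lower bound on $v$ follows by representing $v(\cdot,t)$ as the convolution of $\mu u$ against the kernel of $\partial_t-\Delta+\lambda$.

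For part~(2), the fact that $u_0$ is bounded and vanishes for $x\cdot\xi\gg 1$ allows the construction of an exponential supersolution $\bar u(x,t)=Ae^{-\alpha(x\cdot\xi-ct)}$ with $c>2\sqrt a$ dominating $u_0$ at $t=0$. Using the uniform bounds on $\|u\|_\infty$, $\|v\|_\infty$ and $\|\nabla v\|_\infty$ from \cite{ShXu}, the linearized $u$-equation is essentially $\bar u_t\geq\Delta\bar u-\chi\nabla v\cdot\nabla\bar u+a\bar u$ modulo controllable perturbations, whose dispersion relation $c(\alpha)=\alpha+a/\alpha$ yields any $c>2\sqrt a$ on choosing $\alpha$ suitably. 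This gives $u(x,t)\leq\bar u(x,t)\to 0$ on $\{x\cdot\xi\geq(2\sqrt a+\epsilon)t\}$. The decay of $v$ then follows from its heat-kernel convolution representation combined with the decay of $u$.

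The main obstacle is the absence of a standard comparison principle for the coupled parabolic-parabolic chemotaxis system: a pointwise ordering of initial data does not, in general, yield a pointwise ordering of solutions, since the chemotactic coupling between $u$ and $v$ is non-monotone. Circumventing this requires constructing sub/supersolutions that carry their own self-consistent $v$-components, and leaning heavily on the uniform regularity estimates of \cite{ShXu}. A secondary difficulty is ensuring that the unbounded extent of $u_0$ in the $-\xi$ direction does not cause the chemotactic term to feed back into the front and modify its speed; the hypothesis $b>\frac{N\mu\chi}{4}$ is crucial here, as it provides the global bounds needed to treat the chemotactic drift as a sub-principal perturbation of Fisher-KPP.
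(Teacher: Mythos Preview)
Your proposal has genuine gaps in both parts, and in each case the paper's actual argument is structurally different from what you outline.

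\textbf{Part (1).} The reduction to Theorem~\ref{spreading-thm} cannot be carried through as stated. You correctly identify the obstruction yourself: there is no comparison principle for the coupled parabolic--parabolic system, so the pointwise ordering $(\phi_y,\psi_y)\le(u_0,v_0)$ does \emph{not} imply $u(\cdot,t;\phi_y,\psi_y)\le u(\cdot,t;u_0,v_0)$. Your suggested remedy, ``sub/supersolutions that carry their own self-consistent $v$-components,'' is not an argument but a restatement of the difficulty; constructing such objects is precisely the hard part, and nothing in Theorem~\ref{spreading-thm} helps with it. The paper does \emph{not} reduce to the compactly supported case. Instead it passes to the moving frame $\tilde u(x,t)=u(x+(2\sqrt a-\epsilon)t\xi,t)$ and works directly with the actual solution. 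The key mechanism (Lemmas~\ref{lem-1}--\ref{lem-3}) is local: if $\tilde u$ is small on a ball of radius $2L$ over a time interval, then the heat-kernel representation forces $\tilde v$, $\nabla\tilde v$, and $\Delta\tilde v$ to be small on the concentric ball of radius $L$; on that smaller ball the chemotaxis terms become a genuinely \emph{small} perturbation, and a scalar comparison against the principal Dirichlet eigenfunction on a cube $D_l$ drives $\tilde u$ back up. This iterative local argument never compares two solutions of the full system.

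\textbf{Part (2).} Your exponential barrier $\bar u=Ae^{-\alpha(x\cdot\xi-ct)}$ applied to the equation $\bar u_t\ge\Delta\bar u-\chi\nabla v\cdot\nabla\bar u+a\bar u$ does not yield the dispersion relation $c(\alpha)=\alpha+a/\alpha$. The drift term contributes $\chi\alpha(\nabla v\cdot\xi)$, and the bounds from \cite{ShXu} only give $\|\nabla v\|_\infty\le M$, not smallness; likewise the reaction term carries $-\chi\Delta\tilde v$, again merely bounded. You would obtain at best $c\ge\alpha+a/\alpha-O(\chi M)$ is needed, i.e.\ an upper spreading bound of $2\sqrt a+O(\chi M)$, not $2\sqrt a$. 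The paper circumvents this entirely via Lemma~\ref{upper-bound-lm1}: the auxiliary quantity $w=u+\frac{\chi}{2\mu}|\nabla v|^2$ satisfies the clean scalar inequality $w_t\le\Delta w+aw$, with the chemotaxis cross-terms absorbed exactly (this is where $b>\frac{N\mu\chi}{4}$ enters). Comparison for $w$ against $Me^{-k(x\cdot\xi-ct)}$ with $c=k+a/k$ then gives the sharp speed, and the decay of $v$ follows by a separate comparison using the established decay of $u$ as a source.
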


\begin{tm}\label{spreading-thm-2}
Suppose that $b>\frac{N\mu\chi}{4}$. For any given $\xi\in S^{N-1}$ and $(u_{0}, v_0)\in C^+(\xi)\times C^{+,1}(\xi)$, the following hold.
 \begin{itemize}
 \item[(1)]
 For any $0<\epsilon<\sqrt{a}$,
$$
\liminf_{t\to\infty}\inf_{|x\cdot\xi|\leq (2\sqrt{a}-\epsilon)t}u(x,t;u_0,v_0)>0.
$$

and
$$
\liminf_{t\to\infty}\inf_{|x\cdot\xi|\leq (2\sqrt{a}-\epsilon)t}v(x,t;u_0,v_0)>0.
$$

\item[(2)] For any $\epsilon>0$,
$$
\lim_{t\to\infty}\sup_{|x\cdot\xi|\geq (2\sqrt{a}+\epsilon)t}u(x,t;u_0,v_0)=0,
$$
and
$$
\lim_{t\to\infty}\sup_{|x\cdot\xi|\geq (2\sqrt{a}+\epsilon)t}v(x,t;u_0,v_0)=0.
$$
\end{itemize}
\end{tm}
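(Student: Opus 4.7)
The plan is to reduce Theorem~\ref{spreading-thm-2} to Theorems~\ref{spreading-thm} and \ref{spreading-thm-1} by exploiting the translation invariance of \eqref{Main-Eq} in the directions orthogonal to $\xi$, together with the sub- and super-solution arguments already developed for those two theorems. The initial data class $C^+(\xi)\times C^{+,1}(\xi)$ is intermediate between $C_{cp}^+\times C_{cp}^{+,1}$ and $C_{fl}^+(\xi)\times C_{fl}^{+,1}(\xi)$: it has compact $\xi$-support (as in the compact case) but positivity on a whole slab (as in the front-like case).

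\textbf{Upper bound (part (2)).} Since $(u_0,v_0)\in C^+(\xi)\times C^{+,1}(\xi)$, there exists $L>0$ with $u_0(x)=v_0(x)=0$ for $|x\cdot\xi|\geq L$. In particular both data vanish for $x\cdot\xi\geq L$ and, separately, for $x\cdot(-\xi)\geq L$. The upper-bound portion of Theorem~\ref{spreading-thm-1}(2) relies solely on the one-sided vanishing $u_0=v_0=0$ for $x\cdot\xi\gg 1$, not on the positivity of $u_0$ as $x\cdot\xi\to-\infty$. Applying that argument once in the direction $\xi$ and once in $-\xi$ yields
\[
\lim_{t\to\infty}\sup_{x\cdot\xi\geq(2\sqrt a+\epsilon)t}u(x,t;u_0,v_0)=0\quad\text{and}\quad\lim_{t\to\infty}\sup_{x\cdot\xi\leq-(2\sqrt a+\epsilon)t}u(x,t;u_0,v_0)=0,
\]
whose union is exactly $\{|x\cdot\xi|\geq(2\sqrt a+\epsilon)t\}$. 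The decay of $v$ then follows from the Duhamel representation $v(\cdot,t)=e^{-\lambda t}e^{t\Delta}v_0+\mu\int_0^t e^{-\lambda(t-s)}e^{(t-s)\Delta}u(\cdot,s)\,ds$ together with standard heat-kernel estimates applied to the just-obtained bound on $u$.

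\textbf{Lower bound (part (1)).} Fix $0<\epsilon<\sqrt a$. By hypothesis there exist $r,\delta>0$ such that $u_0(x),v_0(x)\geq\delta$ on the slab $\{|x\cdot\xi|<r\}$. For any $y$ with $y\cdot\xi=0$, the ball $B(y,r)$ is contained in that slab (because $|x\cdot\xi|=|(x-y)\cdot\xi|\leq|x-y|<r$), so $u_0\geq\delta\chi_{B(y,r)}$ and similarly for $v_0$. Given a target point $x_0$ with $|x_0\cdot\xi|\leq(2\sqrt a-\epsilon)t_0$, decompose $x_0=(x_0\cdot\xi)\xi+x_0^\perp$ and take $y=x_0^\perp$; then $|x_0-x_0^\perp|=|x_0\cdot\xi|\leq(2\sqrt a-\epsilon)t_0$. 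Theorem~\ref{spreading-thm}(1) applied to the chemotaxis solution starting from initial data supported in $B(x_0^\perp,r)$ produces, together with translation invariance of \eqref{Main-Eq}, a positive constant $c=c(\epsilon,r,\delta)$ independent of $x_0^\perp$ such that this bump-solution exceeds $c$ at $(x_0,t_0)$ for all sufficiently large $t_0$.

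\textbf{Main obstacle.} The crucial step is transferring this bump lower bound to $u(x_0,t_0;u_0,v_0)$, since \eqref{Main-Eq} does not satisfy a naive comparison principle: enlarging $v_0$ alters the drift $-\chi\nabla\cdot(u\nabla v)$ in a non-monotone way, and enlarging $u_0$ then further affects $v$ through the coupling. I anticipate bypassing this by combining the uniform-in-initial-data large-time lower bound for positive solutions established in \cite{ShXu} with a parabolic Harnack-type argument applied to the translates $B(x_0^\perp,r)$, or, alternatively, by constructing an explicit sub-solution for $u$ using the a~priori $L^\infty$ bounds on $v$ and $\nabla v$ produced by the $v$-equation in the regime $b>\frac{N\mu\chi}{4}$. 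Once the lower bound for $u$ is in hand, the lower bound for $v$ follows immediately from the Duhamel formula for the $v$-equation and the strict positivity of $u$ on the target slab.
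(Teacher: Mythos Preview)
Your treatment of part (2) is essentially the paper's: apply the exponential supersolution $Me^{-k(x\cdot\xi-ct)}$ once in the direction $\xi$ and once in $-\xi$, using only the one-sided vanishing of the data; the paper bounds $v$ by a parallel exponential supersolution rather than Duhamel, but this is cosmetic.

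For part (1), however, your proposal has a genuine gap that you yourself flag but do not close. Your strategy is to place a compactly supported bump $\delta\chi_{B(x_0^\perp,r)}$ under $(u_0,v_0)$, invoke Theorem~\ref{spreading-thm}(1) for the bump, and transfer the resulting lower bound to $u(\cdot,\cdot;u_0,v_0)$. As you note, no comparison principle is available for the coupled system \eqref{Main-Eq}: increasing $u_0$ changes $v$, which changes the drift $-\chi\nabla\!\cdot(u\nabla v)$ in a sign-indefinite way, so the solution with the larger initial data need not dominate the bump solution. The workarounds you sketch (Harnack for the actual solution, or an explicit sub-solution for $u$ using a~priori bounds on $\nabla v$, $\Delta v$) are not developed, and it is not clear that either produces a lower bound that is \emph{uniform} in the translate $x_0^\perp$, which is exactly what the slab conclusion requires.

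The paper avoids this obstacle entirely: it never compares two different solutions of \eqref{Main-Eq}. Instead it works directly with the given solution in moving frames $\tilde u(x,t)=u(x+ct\xi,t)$ and applies the preliminary Lemmas~\ref{lem-2.2}--\ref{lem-3}. Lemma~\ref{lem-1} shows that wherever $\tilde u$ is small on a ball $B_{2L}$, the quantities $\tilde v$, $\nabla\tilde v$, $\Delta\tilde v$ are also small there; this makes $\tilde u$ a supersolution of a linear equation with principal eigenvalue $\lambda(c,\bar a)>0$ on a cube $D_l\subset B_L$, and Lemma~\ref{lem-3} bootstraps to a uniform lower bound $\tilde\delta_\eta$ on the whole of $B_{2L}$. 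Because these lemmas are stated for \emph{arbitrary} balls $B_{2L}$, the paper simply takes, for each $c\in[-2\sqrt a+\epsilon,2\sqrt a-\epsilon]$, all balls $B_{2L(\tilde\epsilon_0)}\subset\{|x\cdot\xi|<2L(\tilde\epsilon_0)\}$ and obtains $\inf_{|x\cdot\xi|<2L(\tilde\epsilon_0)}\tilde u(x,t;\xi,c,u_0,v_0)\ge\min\{\delta_{\tilde\epsilon_0},\tilde\delta_{\tilde\epsilon_0}\}$ for $t\ge T_{00}$. Choosing $c=x\cdot\xi/t$ then covers the whole slab $\{|x\cdot\xi|\le(2\sqrt a-\epsilon)t\}$. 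The lower bound for $v$ follows by a limit-along-subsequences contradiction argument, as in the proof of \eqref{lower-cp-v-1-1}. In short, the paper's machinery already handles the chemotaxis coupling internally; you should use Lemmas~\ref{lem-2.2}--\ref{lem-3} directly rather than attempt a reduction to Theorem~\ref{spreading-thm}.
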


We conclude the introduction with the following remarks.

\begin{rk}
\begin{itemize}
\item[(1)]  As it is recalled in the above, in the absence of chemotaxis (i.e. $\chi=0$), $2\sqrt{a}$ is the spreading speed of \eqref{fisher-kpp}. Theorems \ref{spreading-thm}, \ref{spreading-thm-1}, and \ref{spreading-thm-2} provide some new approach to prove that $2\sqrt a$ is the spreading speed of the Fisher-KPP equation \eqref{fisher-kpp}.  The new approach can also be applied to the study of the spreading speeds of \eqref{general-fisher-eq} with general time and space dependence.

\item[(2)]  Assume $b>\frac{N\mu\chi}{4}$.
 Theorem \ref{spreading-thm} (1),  Theorem \ref{spreading-thm-1} (1) and Theorem \ref{spreading-thm-2} (1) show that the chemotaxis does not slow down the spreading speed in the Fisher-KPP equation \eqref{fisher-kpp}. Theorem \ref{spreading-thm} (2),  Theorem \ref{spreading-thm-1}
(2) and Theorem \ref{spreading-thm-2} (2) show that the chemotaxis does not speed up the spreading speed in the Fisher-KPP equation \eqref{fisher-kpp}. Biologically, the condition $b>\frac{N\mu\chi}{4}$ means that the logistic damping is large relative to the product of the chemotaxis sensitivity and the production rate of the chemical substance.

\item[(3)]
Consider the following parabolic-elliptic  counterpart of \eqref{Main-Eq-1},
\begin{equation}\label{Keller-Segel-eq0-1}
\begin{cases}
u_t=\Delta u-\chi\nabla \cdot (u \nabla v)+u(a-bu),\quad x\in\Omega,\cr
0 =\Delta v-  \lambda v +\mu u,\quad x\in\Omega,\cr
\frac{\p u}{\p n}=\frac{\p v}{\p n}=0,\quad x\in\p \Omega.
\end{cases}
\end{equation}

The dynamics of \eqref{Keller-Segel-eq0-1} has been studied in many research papers and very rich dynamical scenarios have been observed. For example,
 when $a\equiv b \equiv0$, finite-time blow-up  may occur in \eqref{Keller-Segel-eq0-1} if either $N=2$ and the total initial population mass is large enough, or $N\geq 3$ (see \cite{HeMeVe, JaLu, Nagai2, Nagai3}, etc.).
When  $a$ and $b$ are positive constants,  if either $N\le 2$ or $b>\frac{N-2}{N}\chi$, then for any
nonnegative initial data $u_0\in C(\bar\Omega)$, \eqref{Keller-Segel-eq0-1} possesses a  unique bounded global classical solution $(u(x,t;u_0),v(x,t;u_0))$ with
 $u(x,0;u_0)=u_0(x)$, and hence the finite-time blow-up phenomena in \eqref{Keller-Segel-eq0-1} is suppressed  to some  extent.
Moreover, if  $b>2\chi$, then $(\frac{a}{b},\frac{\mu a}{\lambda b})$ is the unique positive steady-state solution
of  \eqref{Keller-Segel-eq0-1}, and for any nonnegative initial distribution $u_0\in C(\bar\Omega)$ ($u_0(x)\not \equiv 0$),
 $$
 \lim_{t\to\infty} \big[ \|u(\cdot,t;u_0)-\frac{a}{b}\|_{L^\infty(\Omega)}+\|v(\cdot,t;u_0)-\frac{\mu a}{\lambda b}\|_{L^\infty(\Omega)}\big]=0
 $$
 (hence the chemotaxis does not affect the limiting distribution).
 But if $b<2\chi$, there may be more than one positive steady-state solutions of \eqref{Keller-Segel-eq0-1} (see \cite{TeWi}).

\item[(4)]
Consider the following parabolic-elliptic counterpart of \eqref{Main-Eq},
\begin{equation}\label{Keller-Segel-eq0}
\begin{cases}
u_t=\Delta u-\chi\nabla \cdot (u \nabla v)+u(a-bu),\quad x\in\R^{N},\cr
0 =\Delta v-  \lambda v +\mu u,\quad x\in\R^{N}.
\end{cases}
\end{equation}
The authors of  \cite{SaSh1, SaSh2, SaSh3-1, SaShXu} studied the spatial spreading dynamics of \eqref{Keller-Segel-eq0}.
Among others, it is proved that,  if $b>\chi\mu$ and $b\geq \big(1+\frac{1}{2}\frac{(\sqrt{a}-\sqrt{\lambda})_+}{(\sqrt{a}+\sqrt{\la})}\big)\chi\mu$,
$c_0^*:=2\sqrt a$ is the spreading speed of the solutions of  \eqref{Keller-Segel-eq0} with nonnegative continuous initial function $u_0$ with nonempty compact support, that is,
 $$
 \lim_{t\to\infty}\sup_{|x|\ge ct}u(x, t;u_0)=0\quad \forall\,\, c>c_0^*
 $$
 and
 $$
 \lim_{t\to\infty}\inf_{|x|\le ct} u(x, t;u_0)>0\quad \forall \,\, 0<c<c_0^*,
 $$
 where $(u(x, t;u_0),v(x, t;u_0))$ is the unique global classical solution of \eqref{Keller-Segel-eq0} with $u(x, 0;u_0)=u_0(x)$.
 It is also proved that,  if  $b>2\chi\mu$  and $\lambda \geq a$ hold, then  $2\sqrt a$ is the minimal speed  of the  traveling wave solutions of \eqref{Keller-Segel-eq0}  connecting $(0,0)$ and $(\frac{a}{b},\frac{\mu}{\lambda}\frac{a}{b})$, that is,
 for any $c\ge 2\sqrt a$, \eqref{Keller-Segel-eq0}
 has a  traveling wave solution  connecting $(0,0)$ and $(\frac{a}{b},\frac{\mu}{\lambda}\frac{a}{b})$ with speed $c$,
 and \eqref{Keller-Segel-eq0} has no such traveling wave solutions with speed less than $2\sqrt a$. In particular, if $\lambda \geq a$ and $b>\chi\mu$, or $\lambda<a$ and  $b\geq \big(1+\frac{1}{2}\frac{(\sqrt{a}-\sqrt{\lambda})}{(\sqrt{a}+\sqrt{\la})}\big)\chi\mu$, then the chemotaxis neither speeds up nor slows down the spatial spreading in {the Fisher-KPP equation \eqref{fisher-kpp}}.

\end{itemize}
\end{rk}

The rest of the paper is organized as follows: In section 2, we study the lower bounds of spreading speeds of \eqref{Main-Eq} and prove Theorem \ref{spreading-thm} (1), Theorem \ref{spreading-thm-1} (1) and Theorem \ref{spreading-thm-2} (1). In section 3, we explore the upper bounds of spreading speeds of \eqref{Main-Eq} and prove Theorem \ref{spreading-thm} (2), Theorem \ref{spreading-thm-1} (2) and Theorem \ref{spreading-thm-2} (2).

\section{Lower bounds of spreading speeds}

In this section, we investigate lower bounds of spreading speeds of  global classical solutions of \eqref{Main-Eq} with different initial functions. We first prove some preliminary lemmas in subsection 2.1. Then we  prove Theorem \ref{spreading-thm} (1), Theorem \ref{spreading-thm-1} (1),  and Theorem \ref{spreading-thm-2} (1) in subsections 2.2, 2.3, and 2.4, respectively.
Throughout this section, we assume that $b>\frac{N\mu\chi}{4}$.

\subsection{Preliminary lemmas}

In this subsection, we present some lemmas to be used in the proofs of Theorem \ref{spreading-thm} (1), Theorem \ref{spreading-thm-1} (1),  and Theorem \ref{spreading-thm-2} (1).

For any given $\xi\in S^{N-1}$ and  $c\in\R$, let $\tilde u(x,t)=u(x+ct\xi, t)$ and $\tilde v(x,t)=v(x+ct\xi, t)$. Then \eqref{Main-Eq} becomes
\begin{equation}\label{tilde-u-v-eq}
\begin{cases}
\tilde u_{t}=\Delta\tilde u+c\xi\cdot\nabla \tilde u- \chi \nabla\cdot (\tilde u\nabla \tilde v) + \tilde u(a-b\tilde u)\quad  x\in\R^N, \\
\tilde v_{t}=\Delta \tilde v+c\xi\cdot\nabla\tilde v -\lambda \tilde v+\mu \tilde u,\quad x\in\R^N.
\end{cases}
\end{equation}
In the following, $(\tilde u(x,t;\xi, c, u_0,v_0), \tilde v(x,t;\xi, c, u_0,v_0))$ denotes the classical solution of \eqref{tilde-u-v-eq} with $\tilde u(x,0;\xi, c, u_0,v_0)=u_0\in X_1^+$ and $\tilde v(x,0;\xi, c, u_0,v_0))=v_0\in  X_2^+$.

For any given $0<\epsilon<\sqrt{a}$, fix $0<\bar a<a$  such that
\begin{equation}
\label{bar-r-eq}
4\bar a -c^2\ge \epsilon \sqrt{a} \quad \forall\, -2\sqrt{a}+\epsilon \le  c  \le  2\sqrt {a}-\epsilon.
\end{equation}
 Let
\begin{equation}
\label{L-eq}
l=\frac{2\pi\sqrt{N}}{(\epsilon\sqrt{a})^{\frac{1}{2}}}
\end{equation}
and
\begin{equation}
\label{lambda-eq}
\lambda(c,\bar a)=\frac{4\bar a-c^2-\frac{N\pi^2}{l^2}}{4}.
\end{equation}
 Then $\lambda(c,\bar a)\ge  \frac{3\epsilon\sqrt {a}}{16}>0$  for any $ -2\sqrt{a}+\epsilon  \le c \le   2\sqrt {a}-\epsilon$.
Let
$$
D_{l}=\{x\in\R^N \,\ | \,\ |x_{i}|<l\,\ {\rm for}\,\ i=1,2,\cdot\cdot\cdot N\}.
$$
For every $x\in\R^N$, and $r>0$, we define
$$
B_{r}(x):=\{y\in\R^N \,|\, |y-x|<r\}.
$$

\begin{lem}
\label{lem-001-2}
For any given  $0<\epsilon<\sqrt{a}$, let $\bar a$ and $l$ be as in \eqref{bar-r-eq} and
\eqref{L-eq}. Then
for any $ -2\sqrt{a}+\epsilon \leq c \leq  2\sqrt {a}-\epsilon$ and $\xi\in S^{N-1}$, $\lambda(c,\bar a)$ which is defined as in \eqref{lambda-eq} is the principal eigenvalue of
\begin{equation*}
\label{ev-eq1}
\begin{cases}
\Delta\phi+ c\xi\cdot\nabla\phi+\bar a \phi=\lambda \phi,\quad x\in D_{l}\cr
\phi(x)=0, \quad x\in \partial D_{l},
\end{cases}
\end{equation*}
and $\phi(x;\xi, c,\bar a)=e^{-\frac{c}{2}\xi\cdot x}\prod_{i=1}^{N}{ \cos\frac{\pi}{2l}x_{i}}$ is a corresponding positive eigenfunction.
\end{lem}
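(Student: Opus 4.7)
The plan is to reduce the drift-diffusion operator $\Delta+c\xi\cdot\nabla$ to a self-adjoint Laplacian via an exponential substitution, and then use separation of variables on the cube $D_l$ with Dirichlet boundary conditions. Concretely, write $\phi = e^{-\frac{c}{2}\xi\cdot x}\psi$ and exploit that $|\xi|=1$.

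First, a direct computation gives, for any smooth $\psi$,
\begin{equation*}
\Delta \phi + c\xi\cdot\nabla\phi = e^{-\frac{c}{2}\xi\cdot x}\Bigl[\Delta\psi - \frac{c^2}{4}\psi\Bigr],
\end{equation*}
so $\phi$ solves $\Delta\phi + c\xi\cdot\nabla\phi + \bar a\phi=\lambda\phi$ in $D_l$ with $\phi|_{\partial D_l}=0$ if and only if $\psi$ solves the self-adjoint Dirichlet problem
\begin{equation*}
\Delta\psi + \Bigl(\bar a-\frac{c^2}{4}\Bigr)\psi = \lambda\psi \text{ in } D_l, \qquad \psi|_{\partial D_l}=0.
\end{equation*}
This transformation is the only nontrivial ingredient; once the first-order term is eliminated, everything else is elementary.

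Next, I would verify by termwise differentiation that $\psi(x)=\prod_{i=1}^{N}\cos\frac{\pi}{2l}x_i$ satisfies $\partial_{x_i}^2\psi=-\frac{\pi^2}{4l^2}\psi$ for each $i$, hence $\Delta\psi=-\frac{N\pi^2}{4l^2}\psi$. Since $|x_i|<l$ on $D_l$ gives $\cos\frac{\pi}{2l}x_i>0$ and $\cos\frac{\pi}{2l}x_i=0$ on $\partial D_l$, $\psi$ is a positive Dirichlet eigenfunction of $\Delta$ on $D_l$. Substituting into the reduced equation forces
\begin{equation*}
\lambda = \bar a - \frac{c^2}{4} - \frac{N\pi^2}{4l^2} = \lambda(c,\bar a),
\end{equation*}
and $\phi=e^{-\frac{c}{2}\xi\cdot x}\psi$ is a positive function on $D_l$ vanishing on $\partial D_l$, as claimed.

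Finally, the identification of $\lambda(c,\bar a)$ as the \emph{principal} eigenvalue follows from the Krein-Rutman theorem (or the well-known fact for second-order elliptic operators on bounded smooth domains with Dirichlet data): the principal eigenvalue is the unique real eigenvalue admitting a strictly positive eigenfunction. Since $\phi(\,\cdot\,;\xi,c,\bar a)>0$ in $D_l$, the pair $(\lambda(c,\bar a),\phi)$ must be the principal pair. There is no substantive obstacle in this lemma; the content is a computational verification, and the only conceptual point worth flagging is the exponential gauge transformation that symmetrizes the drift term.
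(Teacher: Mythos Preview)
Your proof is correct and is precisely the ``direct calculation'' the paper alludes to: the paper's own proof of this lemma consists of the single sentence ``It follows from direct calculations.'' Your exponential gauge $\phi=e^{-\frac{c}{2}\xi\cdot x}\psi$, the verification that $\Delta\psi=-\tfrac{N\pi^2}{4l^2}\psi$ for $\psi=\prod_i\cos\tfrac{\pi}{2l}x_i$, and the Krein--Rutman identification of the principal eigenvalue together make the computation explicit and complete.
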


\begin{proof}
It follows from direct calculations.
\end{proof}

\begin{lem}\label{lem-2.2}
\label{persistence-lm}
 There are  $M>0$, $M_1>0$, and $0<\theta<\frac{1}{2}$ such that for any $(u_0,v_0)\in X_1^+\times X_2^+$, there is
$T_0(u_0,v_0)>1$ such that for any $c\in\R$, any $\xi\in S^{N-1}$, it holds that
\begin{equation*}
\label{condition-on-initial-eq}
\begin{cases}
\|\tilde u(\cdot,t;\xi, c,u_0,v_0)\|_\infty\le M\quad \forall \, t\ge T_0(u_0,v_0)\cr
 \|\tilde v(\cdot,t;\xi, c,u_0,v_0)\|_\infty\le M\quad \forall\, t\ge T_0(u_0,v_0)\cr
\|\nabla\tilde v(\cdot,t;\xi, c,u_0,v_0)\|_\infty\le M\quad \forall\, t\ge T_0(u_0,v_0)\cr
 \|\Delta\tilde v(\cdot,t;\xi, c,u_0,v_0)\|_\infty\le M\quad \forall\, t\ge T_0(u_0,v_0)
 \end{cases}
 \end{equation*}
 and
 \begin{equation*}
 \label{condition-on-initial-eq-0}
 \sup_{t,s\ge T_0(u_0,v_0)+1, t\not= s} \frac{\|\nabla\tilde v(\cdot,t;c, u_0,v_0)-\nabla\tilde v(\cdot,s;c, u_0,v_0)\|_\infty}{|t-s|^\theta}
 \le M M_1.
\end{equation*}
 \end{lem}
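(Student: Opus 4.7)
The key structural observation is that the moving-frame transformation $(\tilde u,\tilde v)(x,t)=(u,v)(x+ct\xi,t)$ is, at each fixed time $t$, a pure spatial translation of $(u(\cdot,t;u_0,v_0),v(\cdot,t;u_0,v_0))$. Consequently each of the four $L^\infty$ quantities in the first display equals its untilded analogue and is automatically independent of $c$ and $\xi$. The same translation, together with the change of variables $y=x+cs\xi$, lets one bound the time-H\"older difference of $\nabla\tilde v$ in terms of a parabolic space-time H\"older seminorm of $\nabla v$ in the original frame (and a uniform spatial Lipschitz bound on $\nabla v$, to absorb the shift $c(t-s)\xi$). Thus the whole lemma reduces to establishing, after some transient time $T_0(u_0,v_0)$, uniform bounds on $\|u\|_\infty,\|v\|_\infty,\|\nabla v\|_\infty,\|\Delta v\|_\infty$ and a parabolic H\"older bound on $\nabla v$, with constants depending only on the system parameters.

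The starting point is the absorbing-set property for $u$ already available in \cite[Theorem 1.2]{ShXu}: under $b>\tfrac{N\mu\chi}{4}$, there exist $M_0=M_0(a,b,\chi,\lambda,\mu,N)$ and $T_0(u_0,v_0)>1$ with $\|u(\cdot,t;u_0,v_0)\|_\infty\le M_0$ for all $t\ge T_0(u_0,v_0)$. Plugging this into the Duhamel representation
\begin{equation*}
v(\cdot,t)=e^{(\Delta-\lambda)(t-T_0)}v(\cdot,T_0)+\mu\int_{T_0}^{t}e^{(\Delta-\lambda)(t-s)}u(\cdot,s)\,ds
\end{equation*}
and using $\|e^{\Delta s}\|_{\infty\to\infty}\le 1$ and $\|\nabla e^{\Delta s}\|_{\infty\to\infty}\le C s^{-1/2}$ together with the exponential factor $e^{-\lambda(t-s)}$ produces eventually uniform bounds on $\|v\|_\infty$ and $\|\nabla v\|_\infty$ depending only on $M_0,\lambda,\mu$.

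For $\|\Delta v\|_\infty$ a bare Duhamel estimate fails, since $\|\Delta e^{\Delta s}\|_{\infty\to\infty}\sim s^{-1}$ is not integrable at $0$. I would instead bootstrap. Having $\|u\|_\infty$ and $\|\nabla v\|_\infty$ both bounded, the $u$-equation is uniformly parabolic in divergence form with bounded drift $-\chi\nabla v$ and bounded zeroth-order term $a-bu$, so interior De Giorgi--Nash--Moser regularity supplies an exponent $\alpha\in(0,1)$ and a uniform parabolic H\"older bound on $u$ over cylinders $B_1(x_0)\times[t-1,t]$ for $t\ge T_0+1$. Feeding this H\"older forcing into the linear $v$-equation and applying interior parabolic Schauder estimates yields a uniform bound on $\|v\|_{C^{2+\alpha,1+\alpha/2}}$ on such cylinders, from which $\|\Delta v(\cdot,t)\|_\infty\le M$ and $\|\nabla v(\cdot,t)-\nabla v(\cdot,s)\|_\infty\le M M_1\,|t-s|^{\theta}$ with $\theta=\alpha/2\in(0,1/2)$ follow at once (for $|t-s|\ge 1$ the time-H\"older bound is trivial from the uniform $L^\infty$ bound on $\nabla v$).

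The main technical obstacle is the two-way coupling of the $u$- and $v$-equations: upgrading $v$-regularity requires $u$-regularity, which in turn requires $\nabla v$-regularity. This is broken by the stepwise chain $\|u\|_\infty\rightsquigarrow\|\nabla v\|_\infty\rightsquigarrow u\in C^{\alpha,\alpha/2}\rightsquigarrow v\in C^{2+\alpha,1+\alpha/2}$ just described. The hypothesis $b>\tfrac{N\mu\chi}{4}$ enters only once, through the absorbing-set step for $u$; past that point the $v$-equation is a linear parabolic equation with controlled forcing and classical regularity closes the estimates uniformly in $c,\xi$ and (past $T_0$) in $(u_0,v_0)$.
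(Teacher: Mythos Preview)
The paper does not argue this lemma at all; its proof is the single sentence ``It follows from \cite[Lemma 4.1]{ShXu}.'' Your sketch is therefore a reconstruction of what that cited lemma presumably contains, and the bootstrap chain you describe --- absorbing $L^\infty$ bound on $u$, then Duhamel for $\|v\|_\infty$ and $\|\nabla v\|_\infty$, then De~Giorgi--Nash--Moser on the divergence-form $u$-equation to get $u\in C^{\alpha,\alpha/2}$, then interior Schauder on the linear $v$-equation for $\|D^2 v\|_\infty$ and time-H\"older control of $\nabla v$ --- is sound and is the natural route. Your observation that the four $L^\infty$ quantities are invariant under the spatial translation $x\mapsto x+ct\xi$, hence automatically independent of $c$ and $\xi$, is exactly right.

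One point does not quite close as stated. Absorbing the shift $c(t-s)\xi$ via the spatial Lipschitz bound $\|D^2 v\|_\infty$ gives, for $|t-s|\le 1$,
\[
\|\nabla\tilde v(\cdot,t)-\nabla\tilde v(\cdot,s)\|_\infty \;\le\; C_1|t-s|^\theta+\|D^2 v\|_\infty\,|c|\,|t-s|\;\le\;\big(C_1+\|D^2 v\|_\infty\,|c|\big)|t-s|^\theta,
\]
so the H\"older constant picks up a factor $|c|$ and is not uniform over all $c\in\R$ as the lemma literally claims. This is harmless for the paper: every application (Lemmas~\ref{lem-1}--\ref{lem-3}) has $|c|\le 2\sqrt a-\epsilon$, so a constant depending on an a~priori bound for $|c|$ suffices, and your argument delivers exactly that. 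Either read the lemma with $c$ confined to a bounded set, or let $M_1$ depend on such a bound.
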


\begin{proof}
It follows from \cite[Lemma 4.1]{ShXu}.
\end{proof}

In the following,
 $M>0$ is as in Lemma \ref{lem-2.2}, and for given $0<\epsilon<\sqrt a$, $l>0$ is as in  \eqref{L-eq}.
For given $\eta>0$, let $T=T(\eta)\ge 1$ be such that
\begin{equation}\label{T-choice}
 e^{-\lambda T}M\leq \eta,
\end{equation}
 and $L=L(\eta)\geq l$ be  such that $B_{L}(0)\supset D_{l}$ and
\begin{equation}\label{L-choice}
\max\{\int_{\R^{N}\backslash {B}_{\frac{L-4T\sqrt{a}}{2\sqrt{2T}}}(0)}e^{-|z|^2}dz, \int_{\R^{N}\backslash {B}_{\frac{L-4T\sqrt{a}}{2\sqrt{2T}}}(0)}|z|e^{-|z|^2}dz\}\leq \eta.
\end{equation}

\begin{lem}\label{lem-1}
For any given  $0<\epsilon<\sqrt{a}$, let $\bar a$ and $l$ be as in \eqref{bar-r-eq} and
\eqref{L-eq}.
Let $0<\tilde a<a-\bar a$ be fixed.
There is $\epsilon_0>0$ such that for any $0<\eta\le \epsilon_0$,  any $(u_{0}, v_{0})\in X_{1}^{+} \times X_{2}^{+}$,  any $\xi\in S^{N-1}$, any $-2\sqrt{a}+\epsilon \le  c  \le  2\sqrt {a}-\epsilon$, any $t_1,t_2$ satisfying $T_0(u_0,v_0)\le t_1< t_2\le  \infty$, and any ball $B_{2L(\eta)}$ with radius $2L(\eta)$ in $\R^{N}$,
if
$$
\sup_{x\in B_{2L(\eta)}} \tilde u(x,t;\xi,c, u_0,v_0)\le \eta\quad \forall\, t_1\le t< t_2,
$$
then
\begin{equation}
\label{v-dv-est}
\sup_{x\in B_{L(\eta)}}\max\{\tilde v(x,t;\xi,c, u_0,v_0),|\p_{x_{i}}\tilde v(x,t;\xi,c, u_0,v_0)|\}\leq \tilde M\eta\quad \forall\, t_1+T(\eta)\le t< t_2
\end{equation}
 and
 \begin{equation}
 \label{ddv-dtv-est}
 \chi \sup_{x\in B_{L(\eta)}} \sum_{i,j=1}^{N}|\p_{x_{i}x_{j}} \tilde v(x,t;\xi,c, u_0,v_0)|\le \tilde a\quad \forall\, t_1+T(\eta)+1\le t<t_2,
 \end{equation}
 where
 $$
 \tilde M=\max\big \{  1+\frac{\mu M}{\lambda \pi^{\frac{N}{2}}}+\frac{\mu}{\lambda},\,\,  1+\frac{ \mu}{\pi^{\frac{N}{2}}}\lambda^{-\frac{1}{2}}\Gamma(\frac{1}{2})M+\frac{ \mu}{\pi^{\frac{N}{2}}}\lambda^{-\frac{1}{2}}\Gamma(\frac{1}{2})\big \}.
 $$
\end{lem}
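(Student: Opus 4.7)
The plan is to write Duhamel's formula for the linear parabolic equation $\tilde v_t=\Delta\tilde v+c\xi\cdot\nabla\tilde v-\lambda\tilde v+\mu\tilde u$ starting from time $t_1$. The fundamental solution on $\R^N$ of the operator $\p_t-\Delta-c\xi\cdot\nabla+\lambda$ is the shifted Gaussian
\begin{equation*}
\tilde G_\tau(z)=e^{-\lambda\tau}(4\pi\tau)^{-N/2}\exp\!\Bigl(-\frac{|z+c\tau\xi|^2}{4\tau}\Bigr),\qquad \tau>0,
\end{equation*}
so that for $t\in[t_1,t_2)$ and $x\in\R^N$,
\begin{equation*}
\tilde v(x,t)=\bigl(\tilde G_{t-t_1}*\tilde v(\cdot,t_1)\bigr)(x)+\mu\int_{t_1}^{t}\bigl(\tilde G_{t-s}*\tilde u(\cdot,s)\bigr)(x)\,ds.
\end{equation*}
I will estimate each piece on $B_{L(\eta)}$ for $t\ge t_1+T(\eta)$.

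For the initial-data convolution, Lemma \ref{lem-2.2} gives $\|\tilde v(\cdot,t_1)\|_\infty\le M$, while $\|\tilde G_{t-t_1}\|_{L^1}=e^{-\lambda(t-t_1)}\le e^{-\lambda T(\eta)}$, so by \eqref{T-choice} this contribution is $\le\eta$. For the source integral, split the $s$-interval at $t-T(\eta)$: on the early portion $s\in[t_1,t-T(\eta)]$, the kernel has $L^1$-norm $\le e^{-\lambda T(\eta)}$, hence the global bound $\tilde u\le M$ yields a contribution $\le \mu M e^{-\lambda T(\eta)}/\lambda\le \mu\eta/\lambda$. On the late portion $s\in[t-T(\eta),t]$, split the $y$-integration into $|y|\le 2L(\eta)$, where $\tilde u\le\eta$ by hypothesis and the contribution is $\le \mu\eta/\lambda$, and $|y|>2L(\eta)$: for $x\in B_L$ and $\tau=t-s\in(0,T(\eta)]$ the Gaussian $\tilde G_\tau(x-\cdot)$ is centered at $x+c\tau\xi$ with $|c\tau|\le 2\sqrt a\,T(\eta)$, so rescaling by $z=(y-x-c\tau\xi)/(2\sqrt\tau)$ reduces this to the first tail integral in \eqref{L-choice}, which is $\le\eta$, giving an outer contribution $\le \mu M\eta/(\lambda\pi^{N/2})$. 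Summing the pieces produces $|\tilde v(x,t)|\le\tilde M\eta$ with the first expression for $\tilde M$. The bound on $\p_{x_i}\tilde v$ follows identically once one differentiates the kernel, $\p_{x_i}\tilde G_\tau(z)=-\tfrac{z_i+c\tau\xi_i}{2\tau}\tilde G_\tau(z)$: the inner-ball contribution uses $\int|z_i|\tilde G_\tau(z)\,dz\sim e^{-\lambda\tau}\sqrt\tau$ together with $\int_0^\infty e^{-\lambda\tau}\tau^{-1/2}\,d\tau=\lambda^{-1/2}\Gamma(1/2)$, while the outer-ball contribution invokes the second tail integral in \eqref{L-choice}.

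For the second-order bound \eqref{ddv-dtv-est}, differentiating Duhamel twice introduces a non-integrable kernel singularity at $s=t$, so I would exploit instead the extra unit of time in the hypothesis. At any $t\ge t_1+T(\eta)+1$, view $\tilde v$ on the parabolic cylinder $B_{3L/2}(0)\times(t-1,t]$ as a classical solution of the linear parabolic equation above, in which the drift coefficient $c\xi$ is bounded, the zeroth-order coefficient is the constant $-\lambda$, and the forcing $\mu\tilde u$ satisfies $\|\tilde u\|_{L^\infty}\le\eta$ on this cylinder (since $B_{3L/2}\subset B_{2L}$). By the preceding two steps, $\|\tilde v\|_{L^\infty}$ and $\|\nabla\tilde v\|_{L^\infty}$ are of order $\eta$ on the same cylinder. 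A standard interior parabolic $W^{2,p}$ estimate with $p>N+2$, Sobolev-embedded into $C^{1,\alpha}$, followed by a Schauder bootstrap using the H\"older-in-time regularity of $\nabla\tilde v$ supplied by Lemma \ref{lem-2.2}, yields $\sum_{i,j}\|\p_{x_ix_j}\tilde v\|_{L^\infty(B_L\times\{t\})}\le C\eta$ with $C$ independent of $t_1,c,\xi,u_0,v_0$. Choosing $\epsilon_0$ so small that $\chi C\epsilon_0\le\tilde a$ then establishes \eqref{ddv-dtv-est}.

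The main obstacle I anticipate is this last step. Unlike the zeroth- and first-order estimates, which reduce to transparent Gaussian tail computations with explicit constants matching the stated form of $\tilde M$, the $L^\infty$ bound on second derivatives genuinely requires an interior parabolic regularity argument, and the role of the time offset $+1$ in $t\ge t_1+T(\eta)+1$ is precisely to leave room for a localized $W^{2,p}$/Schauder bootstrap on a sub-cylinder to take effect.
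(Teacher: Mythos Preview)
Your treatment of \eqref{v-dv-est} is sound and close to the paper's, though organized differently: you split the Duhamel source integral first in time at $s=t-T(\eta)$ and then spatially on the late piece, whereas the paper splits only spatially but restricts to $t\in[t_1+T,t_1+2T]$ and then iterates by shifting $t_1\mapsto t_1+T$. Your route avoids the iteration at the cost of a constant $1+2\mu/\lambda+\mu M/(\lambda\pi^{N/2})$ instead of the stated first component of $\tilde M$; this is cosmetic.

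For \eqref{ddv-dtv-est} there is a real gap. Your Schauder bootstrap would need the parabolic H\"older norm of the source $\mu\tilde u$ to be $O(\eta)$, but the hypothesis gives only $\|\tilde u\|_{L^\infty(B_{2L}\times[t_1,t_2))}\le\eta$; the H\"older seminorm $[\tilde u]_{C^{\alpha,\alpha/2}}$ is uniformly bounded (via parabolic regularity for the $u$-equation with coefficients controlled by Lemma~\ref{lem-2.2}) but not small. Hence Schauder only yields $\|D^2\tilde v\|_\infty\le C$, not $\le C\eta$, and invoking the H\"older-in-time bound $MM_1$ on $\nabla\tilde v$ from Lemma~\ref{lem-2.2} does not help, since that constant is not small either. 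Your quantitative route can be repaired by interpolating the $W^{2,1}_p$ bound $\|D^2\tilde v\|_{L^p}\le C_p\eta$ against the uniform bound $[D^2\tilde v]_{C^\alpha}\le C$ to obtain $\|D^2\tilde v\|_\infty\le C\eta^\theta$ for some $\theta\in(0,1)$, which still suffices to choose $\epsilon_0$; but this extra step is missing from what you wrote.

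The paper bypasses all of this with a soft compactness--contradiction argument. If \eqref{ddv-dtv-est} failed along $\eta_n\to 0$ at points $(x_n,t_n)$, one recenters, uses the uniform a priori bounds of Lemma~\ref{lem-2.2} plus parabolic regularity to extract a subsequence with $(\tilde u_n,\tilde v_n)\to(u^*,v^*)$ in $C^{2,1}_{\rm loc}(\R^N\times[-1,\infty))$, and observes that since $L(\eta_n)\to\infty$ the hypothesis forces $u^*\equiv 0$ and, by \eqref{v-dv-est}, $v^*\equiv 0$ on $\R^N\times[-1,0]$; the $v$-equation then keeps $v^*\equiv 0$, contradicting $\chi\sum_{i,j}|\p_{x_ix_j}v^*(\cdot,0)|\ge\tilde a$. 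The extra ``$+1$'' in the time hypothesis is used precisely to give the limit a nondegenerate past interval, not to make room for a localized Schauder estimate.
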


\begin{proof}
It suffices to prove the lemma for the ball centered at the origin with radius $2L(\eta)$. If not, we can make appropriate translation of $(\tilde u(x,t;\xi,c, u_0,v_0), \tilde v(x,t;\xi,c, u_0,v_0))$ for the space variable $x$ to achieve this.
We first prove that \eqref{v-dv-est} holds for any $\eta>0$.
Fix $t_1\geq T_0(u_0,v_0)$. Note that
 \begin{align*}
&\tilde v(x,t;\xi,c, u_0,v_0)\\
&=\int_{\R^{N}}\frac{e^{-\lambda (t-t_1)}}{(4\pi (t-t_1))^{\frac{N}{2}}}e^{-\frac{|x+c(t-t_1)\xi-y|^{2}}{4(t-t_1)}}\tilde v(y, t_1;\xi, c, u_0,v_0)dy \\
&\,\,+\mu \int_{t_1}^{t}\int_{\R^{N}}\frac{e^{-\lambda (t-s)}}{(4\pi (t-s))^{\frac{N}{2}}}e^{-\frac{|x+c(t-s)\xi-y|^{2}}{4(t-s)}}\tilde u(y, s;\xi, c, u_0,v_0)dyds\\
&=\frac{1}{\pi^{\frac{N}{2}}}\int_{\R^{N}}e^{-\lambda (t-t_1)}e^{-|z|^2}\tilde v(x+c(t-t_1)\xi+2\sqrt{t-t_1}z, t_1;\xi, c,u_0,v_0)dz \\
&\,\,+\frac{\mu}{\pi^{\frac{N}{2}}} \int_{t_1}^{t}\int_{\R^{N}}e^{-\lambda (t-s)}e^{-|z|^2}\tilde u(x+c(t-s)\xi+2\sqrt{t-s}z, s;\xi,c,u_0,v_0)dzds,
\end{align*}
and
\begin{align*}\label{v-deriv-est}
&\partial_{x_{i}}\tilde v(x,t;\xi,c, u_0,v_0)\\
&=\int_{\R^N}\frac{(y_{i}-x_{i}-c(t-t_1)\xi)e^{-\lambda (t-t_1)}}{2(t-t_1)(4\pi (t-t_1))^{\frac{N}{2}}}e^{-\frac{|x+c(t-t_1)\xi-y|^{2}}{4(t-t_1)}}\tilde v(y, t_1;\xi,c,u_0,v_0)dy\\
&\,\,+\mu \int_{t_1}^{t}\int_{\R^N}\frac{(y_{i}-x_{i}-c(t-s)\xi)e^{-\lambda (t-s)}}{2(t-s)(4\pi (t-s))^{\frac{N}{2}}}e^{-\frac{|x+c(t-s)\xi-y|^{2}}{4(t-s)}}\tilde u(y, s;\xi,c,u_0,v_0)dyds\\
&=\frac{1}{\pi^{\frac{N}{2}}}(t-t_1)^{-\frac{1}{2}}e^{-\lambda (t-t_1)}\int_{\R^N}ze^{-z^2}\tilde v(x+c(t-t_1)\xi+2\sqrt{t-t_1}z, t_1;\xi,c,u_0,v_0)dz\\
&\,\,+\frac{\mu}{\pi^{\frac{N}{2}}} \int_{t_1}^{t}\int_{\R^N}(t-s)^{-\frac{1}{2}}e^{-\lambda (t-s)}ze^{-z^2}\tilde u(x+c(t-s)\xi+2\sqrt{t-s}z, s;\xi,c,u_0,v_0)dzds.
\end{align*}
Hence, for $x\in B_{L}(0)$ and $t_1+T\leq t\leq \min\{t_1+2T,t_2\}$, we have
\begin{align*}
\tilde v(x,t; \xi,c, u_0,v_0)&\leq e^{-\lambda T}M+\frac{ \mu}{\pi^{\frac{N}{2}}} \left[\int_{t_1}^{t}\int_{\R^N\backslash {B}_{\frac{L-4T\sqrt{a}}{2\sqrt{2T}}}(0)}e^{-\lambda (t-s)}e^{-|z|^2}dzds\right]M \cr
&+\frac{ \mu}{\pi^{\frac{N}{2}}} \left[\int_{t_1}^{t}\int_{{B}_{ \frac{L-4T\sqrt{a}}{2\sqrt{2T}}}(0)}e^{-\lambda (t-s)}e^{-|z|^2}dzds\right]\sup_{t_1\leq t< t_2, |z|\leq 2L}\tilde u(z, t;\xi,c,u_0,v_0).
\end{align*}
By \eqref{T-choice} and \eqref{L-choice},  if $\sup_{x\in B_{2L}(0)} \tilde u(x,t;\xi,c, u_0,v_0)\le \eta$ for any $t_1\le t< t_2$, then
\begin{equation}\label{v-est}
\tilde v(x,t;\xi,c, u_0,v_0)\leq (1+\frac{\mu M}{\lambda \pi^{\frac{N}{2}}}+\frac{\mu}{\lambda})\eta \quad \forall\,\ t_1+T\leq t\leq \min\{t_1+2T,t_2\},\,\
|x|\leq L.
\end{equation}
For $t_1+T\leq t\leq \min\{t_1+2T,t_2\}$, and  $x\in B_{L}(0)$, we have
\begin{align*}\label{partial-v-est}
&|\partial_{x_{i}}\tilde v(x,t;\xi,c, u_0,v_0)|\cr
& \leq \frac{1}{\pi^{\frac{N}{2}}}T^{-\frac{1}{2}}e^{-\lambda T}M+\frac{ \mu}{\pi^{\frac{N}{2}}} \left[\int_{t_1}^{t}\int_{\R^{N}\backslash {B}_{\frac{L-4T\sqrt{a}}{2\sqrt{2T}}}(0)}(t-s)^{-\frac{1}{2}}e^{-\lambda (t-s)}|z|e^{-|z|^2}dzds\right]M \cr
&+\frac{ \mu}{\pi^{\frac{N}{2}}} \left[\int_{t_1}^{t}\int_{{B}_{ \frac{L-4T\sqrt{a}}{2\sqrt{2T}}}(0)}(t-s)^{-\frac{1}{2}}e^{-\lambda (t-s)}|z|e^{-|z|^2}dzds\right]\sup_{t_1\leq t< t_2, |z|\leq 2L}\tilde u(z,t;\xi,c,u_0,v_0).
\end{align*}
By \eqref{T-choice} and \eqref{L-choice},  if $\sup_{x\in B_{2L}(0)} \tilde u(x,t;\xi,c, u_0,v_0)\le \eta$ for any $t_1\le t< t_2$, then
\begin{equation}
\label{partial-v-est-1}
|\partial_{x_{i}}\tilde v(x,t;\xi,c,u_0,v_0)|\leq (1+\frac{ \mu}{\pi^{\frac{N}{2}}}\lambda^{-\frac{1}{2}}\Gamma(\frac{1}{2})M+\frac{ \mu}{\pi^{\frac{N}{2}}}\lambda^{-\frac{1}{2}}\Gamma(\frac{1}{2}))\eta
 \end{equation}
 for  $t_1+T\leq t\le \min\{t_1+2T, t_2\}$ and $x\in B_{L}(0)$.

 In the above arguments, replace $t_1$ by $t_1+T$. We have \eqref{v-est} and \eqref{partial-v-est-1}
 for $t_1+2T\le t\le \min\{t_1+3T,t_2\}$. Repeating this process, we have \eqref{v-est} and \eqref{partial-v-est-1}
 for $t_1+T\le t<t_2$.
It then follows that \eqref{v-dv-est} holds for any $\eta>0$.

Next, we prove that there is $\epsilon_0>0$ such that \eqref{ddv-dtv-est} holds for $0<\eta\le \epsilon_0$. Assume this is  not true. Then there are $\eta_n\to 0$ as $n\to\infty$,
$(u_n,v_n)\in X_{1}^{+} \times X_{2}^{+}$, $\xi_{n}\in S^{N-1}$, $-2\sqrt{a}+\epsilon \le  c_n  \le  2\sqrt {a}-\epsilon$, $T_0(u_n,v_n)\le t_{1n}<t_{1n}+T(\eta_n)+1\le t_n<t_{2n}$ such that
$$
\sup_{|x|\leq 2L(\eta_{n})}\tilde u(x,t;\xi_{n}, c_n, u_n,v_n)\le \eta_n, \quad \forall\, t_{1n}\le t< t_{2n}
$$
and
$$
\chi \sup_{|x|\leq L(\eta_{n})}\sum_{i,j=1}^{N} |\p_{x_{i}x_{j}}\tilde v(x,t_n;\xi_{n}, c_n, u_n,v_n)|> \tilde a.
$$
Let
$$
(\tilde u_n(x,t),\tilde v_n(x,t))=(\tilde u(x,t+t_n;\xi_{n}, c_n, u_n,v_n),\tilde v(x,t+t_n;\xi_{n}, c_n, u_n,v_n)).
$$
Without loss of generality, we may assume that
$$
(\tilde u_n(x,t),\tilde v_n(x,t))\to (u^*(x,t),v^*(x,t))
$$
as $n\to \infty$ locally uniformly on $(x,t)\in \R^{N}\times [-1,\infty)$, $\xi_{n}\to \xi^{*}$, $c_{n}\to c^*$ as $n\to\infty$ for some $\xi^{*}\in S^{N-1}$, $-2\sqrt{a}+\epsilon \le  c^*  \le  2\sqrt {a}-\epsilon$. Note that $v^*(x,t)$ satisfies
$$
v_t^*= \Delta v^{*}+c^{*}\xi^{*}\cdot \nabla v^{*}-\lambda v^*+\mu u^*,\quad \forall\,\ x\in\R^{N},\,\, t\ge -1
$$
and
$$
\chi \sup_{x\in\R^{N}}\sum_{i,j=1}^{N}|\p_{x_{i}x_{j}}v^*(x,0)|\geq \tilde a.
$$
By \eqref{v-dv-est}, we have
$$
u^*(x,t)=0,\,\, v^*(x,t)=0\quad \forall\,\ {x\in \R^N},\,\, -1\le t \le 0.
$$
Then by the comparison principle for parabolic equations,
$$
v^*(x,t)= 0\quad\forall\, x\in\R^{N},\,\, t\ge -1,
$$
which is a contradiction. Hence  \eqref{ddv-dtv-est} holds.
\end{proof}

\begin{lem}\label{lem-2} For any given  $0<\epsilon<\sqrt{a}$, let $\bar a$ and $l$ be as in \eqref{bar-r-eq} and
\eqref{L-eq}.
Let $\lambda_0=\min_{-2\sqrt{a}+\epsilon\leq c\leq 2\sqrt{a}-\epsilon}\lambda(c,\bar a)>0$, where $\lambda(c,\bar a)$ is as in Lemma \ref{lem-001-2}. Let $\tilde T_0\geq 1$ be such that
$e^{\lambda_{0}\tilde T_0}\ge 4$. Let $\epsilon_0$ be as in Lemma \ref{lem-1}.
For any $0<\eta\le \epsilon_0$, there is $0<\delta_{\eta}\leq \epsilon_0$ such that for any $(u_0,v_0)\in X_{1}^+ \times X_{2}^{+}$, any $\xi\in S^{N-1}$, any $-2\sqrt{a}+\epsilon\leq c\leq 2\sqrt{a}-\epsilon$, any  $t_0\ge T_0(u_0,v_0)+2$, and any ball $B_{2L}\subset\R^{N}$ with radius $2L$, if
$$
\sup_{x\in B_{2L}} \tilde u(x, t_0;\xi,c,u_0,v_0)\ge \eta,
$$
then
$$
\inf_{x\in B_{2L}} \tilde u(x,t;\xi,c,u_0,v_0)\ge \delta_{\eta}\quad \forall\, t_0\le t\le t_0+T+\tilde T_0,
$$
where $L=L(\eta)$ and $T=T(\eta)$.

\end{lem}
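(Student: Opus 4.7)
My plan is to argue by contradiction, using a compactness/passage-to-the-limit scheme combined with the strong parabolic maximum principle. Assume the conclusion fails; then there exist sequences $(u_n,v_n)\in X_1^+\times X_2^+$, $\xi_n\in S^{N-1}$, $c_n\in[-2\sqrt a+\epsilon,2\sqrt a-\epsilon]$, $t_{0n}\ge T_0(u_n,v_n)+2$, balls $B_{2L}(y_n)$, points $x_n^{**}\in\overline{B_{2L}(y_n)}$, and times $s_n\in[0,T+\tilde T_0]$ such that
\[
\sup_{x\in B_{2L}(y_n)}\tilde u(x,t_{0n};\xi_n,c_n,u_n,v_n)\ge \eta
\qquad\text{yet}\qquad
\tilde u(x_n^{**},t_{0n}+s_n;\xi_n,c_n,u_n,v_n)\longrightarrow 0.
\]

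I would then recenter in space and time by putting $\hat u_n(x,t)=\tilde u(x+y_n,t+t_{0n};\xi_n,c_n,u_n,v_n)$ and defining $\hat v_n$ analogously, so that each pair solves \eqref{tilde-u-v-eq} on $\R^N\times[-2,\infty)$ and the $t=0$ slice encodes the contradiction. The hypothesis $t_{0n}\ge T_0(u_n,v_n)+2$ is exactly what enables me to extend two units backward in time while still falling under Lemma \ref{persistence-lm}, so that $\|\hat u_n\|_\infty$, $\|\hat v_n\|_\infty$, $\|\nabla\hat v_n\|_\infty$, $\|\Delta\hat v_n\|_\infty$ are all bounded by $M$ on $\R^N\times[-2,T+\tilde T_0]$. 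Rewriting the $\hat u_n$-equation in nondivergence form
\[
\partial_t\hat u_n=\Delta\hat u_n+(c_n\xi_n-\chi\nabla\hat v_n)\cdot\nabla\hat u_n+\hat u_n\bigl(a-b\hat u_n-\chi\Delta\hat v_n\bigr)
\]
displays a linear parabolic equation with uniformly bounded drift and potential, so classical interior parabolic regularity supplies uniform local Hölder estimates on $\hat u_n$ (and parallel estimates on $\hat v_n$). By Arzelà–Ascoli and a diagonal extraction, a subsequence converges locally uniformly on $\R^N\times[-2,T+\tilde T_0]$ to some nonnegative pair $(u^*,v^*)$, together with $\xi_n\to\xi^*\in S^{N-1}$, $c_n\to c^*\in[-2\sqrt a+\epsilon,2\sqrt a-\epsilon]$, $x_n^{**}-y_n\to\bar x\in\overline{B_{2L}(0)}$, and $s_n\to s^*\in[0,T+\tilde T_0]$.

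In the limit, $(u^*,v^*)$ still solves \eqref{tilde-u-v-eq} on $\R^N\times[-2,T+\tilde T_0]$, the sup hypothesis yields $\sup_{\overline{B_{2L}(0)}}u^*(\cdot,0)\ge\eta$, and the inf hypothesis yields $u^*(\bar x,s^*)=0$. I would then view $u^*\ge 0$ as a solution of the linear parabolic equation $u_t^*=\Delta u^*+A\cdot\nabla u^*+B\,u^*$ with bounded coefficients $A=c^*\xi^*-\chi\nabla v^*$ and $B=a-bu^*-\chi\Delta v^*$. Since $u^*$ vanishes at the interior time $s^*>-2$ of the cylinder $\R^N\times[-2,s^*]$, the strong parabolic maximum principle forces $u^*\equiv 0$ on this whole cylinder; in particular $u^*(\cdot,0)\equiv 0$, which contradicts $\sup u^*(\cdot,0)\ge\eta>0$.

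The step I expect to be most delicate is the backward-in-time extension: the strong maximum principle requires the vanishing time $s^*$ to sit strictly in the interior of the time cylinder, and this is why the hypothesis $t_0\ge T_0(u_0,v_0)+2$ (providing a two-unit cushion in the past) is so precisely tailored—without it one would be stuck with $s^*=0$ on the temporal boundary and the principle would fail. The other point that needs care is checking that the bounds from Lemma \ref{persistence-lm}, combined with parabolic regularity, give enough uniform compactness to pass the coefficients $\nabla\hat v_n$ and $\Delta\hat v_n$ to the limit so that $u^*$ satisfies a genuine linear equation with bounded coefficients; once this is secured, the application of the strong maximum principle is standard.
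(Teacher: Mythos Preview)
Your proposal is correct and follows essentially the same approach as the paper: a contradiction argument, recentering and shifting in time so as to retain a backward cushion, extracting a convergent subsequence via the uniform bounds of Lemma~\ref{persistence-lm} and parabolic regularity, and then invoking the strong parabolic maximum principle to force $u^*\equiv 0$ on the earlier time slab, contradicting $\sup u^*(\cdot,0)\ge\eta$. The only cosmetic differences are that the paper recenters at a point $x_n$ where $\tilde u(x_n,t_{0n})\ge\eta$ rather than at the ball center, and shifts back by one unit (working on $[0,\infty)$ with the ``good'' mass at $t=1$) rather than two; both variants are equivalent and exploit the hypothesis $t_0\ge T_0(u_0,v_0)+2$ in the same way.
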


\begin{proof}
Suppose on the contrary that the conclusion fails. Then there exist $0<\eta_0\le \epsilon_0$,
$(u_{0n},v_{0n})\in X_{1}^+ \times X_{2}^{+}$, $\xi_{n}\in S^{N-1}$, $-2\sqrt{a}+\epsilon\leq c_{n}\leq 2\sqrt{a}-\epsilon$, $t_{0n}\geq T_0(u_{0n},v_{0n})+2$, a sequence of ball $B^{n}_{2L(\eta_0)}\subset \R^{N}$ with radius $2L(\eta_{0})$, $x_n$, $x^*_{n}\in\R$ with $x_n\in B^{n}_{2L(\eta_0)}$, $x^*_{n}\in B^{n}_{2L(\eta_0)}$, $t_{n}\in\R$ with $t_{0n}\leq t_{n}\leq t_{0n}+T(\eta_0)+ \tilde T_0$ such that
\begin{equation}\label{u-t-0-n}
\lim_{n\to\infty} \tilde u(x_n, t_{0n};\xi_{n}, c_n, u_{0n}, v_{0n})\geq \eta_0
\end{equation}
and
\begin{equation}\label{u-t-n}
\lim_{n\to\infty} \tilde u(x^*_{n}, t_{n};\xi_{n}, c_n, u_{0n}, v_{0n})=0.
\end{equation}
Let $\tilde u_n(x,t)=\tilde u(x+x_{n} ,t+t_{0n}-1;\xi_{n},c_n, u_{0n},v_{0n})$, $\tilde v_n(x,t)=\tilde v(x+x_{n}, t+t_{0n}-1;\xi_{n}, c_n, u_{0n},v_{0n})$, and
$T=T(\eta_0)+ \tilde T_0$, $L=L(\eta_0)$.
Without loss of generality,
we may assume that
\begin{equation*}\label{x-t-conv}
x^{*}_{n}-x_{n}\to x^{*},  \quad t_{n}-t_{0n}+1\to t^{*}\geq 1 \quad {\rm as}\quad  n\to\infty
\end{equation*}
and
\begin{equation*}\label{u-n-conv}
(\tilde u_n(x,t),\tilde v_n(x,t))\to (u^*(x,t),v^*(x,t))
\end{equation*}
as $n\to\infty$ locally uniformly in $(x,t)\in \R^{N}\times [0,\infty)$, $\xi_{n}\to \xi^{*}$ and $c_n\to c^*$ as $n\to\infty$ for some $\xi^{*}\in S^{N-1}$, $-2\sqrt{a}+\epsilon\leq c^{*}\leq 2\sqrt{a}-\epsilon$. Then  $(u^*, v^*)$ is a solution of \eqref{tilde-u-v-eq} with $\xi$ being replaced by $\xi^{*}$ and $c$ being replaced by $c^{*}$ for $t\ge 0$.

By \eqref{u-t-0-n}, $u^{*}(0,1)\geq \eta_{0}$, it follows from comparison principle for parabolic equations that $u^{*}(x,t)>0$ for $x\in\R^{N}$, $t>0$. But by \eqref{u-t-n}, $u^{*}(x^{*}, t^{*})=0$. This is a contraction.
\end{proof}

\begin{lem}\label{lem-3}
For any given  $0<\epsilon<\sqrt{a}$, let $\bar a$ and $l$ be as in \eqref{bar-r-eq} and
\eqref{L-eq}.
There is $ 0<\tilde\epsilon_0\le \epsilon_0$ such that for any $0<\eta\le  \tilde\epsilon_0$, there is $\tilde\delta_\eta>0$ such that
for any $(u_0,v_0)\in X_{1}^+ \times X_{2}^{+}$, any $\xi\in S^{N-1}$, any $-2\sqrt{a}+\epsilon\leq c\leq 2\sqrt{a}-\epsilon$, any $t_1,t_2$ satisfying that $T_0(u_0,v_0){+2}\le t_1< t_2\le \infty$, and any ball $B_{2L}\subset\R^{N}$ with radius $2L$,
if
$$
\sup_{x\in B_{2L}}\tilde u(x, t_1;\xi, c, u_0,v_0)=\eta,\, \,\, \sup_{x\in B_{2L}} \tilde u(x,t;\xi, c, u_0,v_0)\le \eta,\,\, \forall\,\ t_1<t<t_2,
$$
then
$$
\inf_{x\in B_{2L}} \tilde u(x,t;\xi, c, u_0,v_0)\ge \tilde\delta_\eta\quad \forall\,\ t_1\le t< t_2,
$$
where $L=L(\eta)$.
\end{lem}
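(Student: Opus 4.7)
The plan is to argue by contradiction, using compactness and the strong maximum principle in the style of Lemma~\ref{lem-2}. First, choose $\tilde\epsilon_0\in(0,\epsilon_0]$ small enough that $a-\tilde a-b\tilde\epsilon_0\ge\bar a$; combined with the bounds of Lemma~\ref{lem-1}, this guarantees that under the hypotheses of Lemma~\ref{lem-3}, for $t\ge t_1+T(\eta)+1$ the function $\tilde u$ is a nonnegative supersolution of the linear inequality
$$
w_t\ge\Delta w+(c\xi-\chi\nabla\tilde v)\cdot\nabla w+\bar a\,w
$$
on $B_L$, with drift uniformly bounded by $2\sqrt{a}+\chi\tilde M\eta$.

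Suppose, for contradiction, that no $\tilde\delta_\eta>0$ works for some $\eta\le\tilde\epsilon_0$. Then one can extract sequences $(u_{0n},v_{0n})\in X_1^+\times X_2^+$, $\xi_n\in S^{N-1}$, $c_n\in[-2\sqrt{a}+\epsilon,2\sqrt{a}-\epsilon]$, times $T_0(u_{0n},v_{0n})+2\le t_{1n}<t_{2n}$, balls $B^n_{2L}\subset\R^N$ of radius $2L=2L(\eta)$, together with $t^*_n\in[t_{1n},t_{2n})$ and $x^*_n\in\overline{B^n_{2L}}$ satisfying the three hypotheses and $\tilde u_n(x^*_n,t^*_n)\to 0$. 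Choose $t^*_n$ as the infimum of times $t\in[t_{1n},t_{2n})$ for which $\inf_{B^n_{2L}}\tilde u_n(\cdot,t)\le\tilde u_n(x^*_n,t^*_n)$, so that the infimum on $B^n_{2L}$ strictly exceeds $\tilde u_n(x^*_n,t^*_n)$ for $t\in[t_{1n},t^*_n)$; applying Lemma~\ref{lem-2} at $t_{1n}$ (where $\sup=\eta$) forces $t^*_n-t_{1n}>T+\tilde T_0$. Translate $(x^*_n,t^*_n)$ to $(0,0)$. Lemma~\ref{lem-2.2} together with standard parabolic Schauder estimates yields, along a subsequence, $C^{2,1}_{\mathrm{loc}}$ limits $(\tilde u_n,\tilde v_n)\to(u^*,v^*)$, $\xi_n\to\xi^*$, $c_n\to c^*$, shifted centers $y_n\to y^\infty$ with $|y^\infty|\le 2L$, and $s_n:=t_{1n}-t^*_n\to s^*\in[-\infty,-T-\tilde T_0]$. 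The limit solves \eqref{tilde-u-v-eq} (with $\xi^*,c^*$) on $\R^N\times(s^*,\infty)$, satisfies $u^*\ge 0$ and $u^*(0,0)=0$. Viewing the $u^*$-equation as a linear parabolic equation in $u^*$ with bounded coefficients determined by $v^*$, the strong maximum principle yields $u^*\equiv 0$ on $\R^N\times(s^*,0]$.

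The contradiction comes from the initial-time supremum. If $s^*$ is finite, pick $z_n\in\overline{B^n_{2L}}$ with $\tilde u_n(z_n,t_{1n})\to\eta$ and extract $z_n-x^*_n\to z^\infty$; passing to the limit gives $u^*(z^\infty,s^*)=\eta$, contradicting $u^*\equiv 0$. The hardest case is $s^*=-\infty$, where the anchoring supremum condition escapes to $-\infty$. I would handle it by replacing $t_{1n}$ with $\tau_n:=\sup\{t\in[t_{1n},t^*_n]\colon\sup_{B^n_{2L}}\tilde u_n(\cdot,t)=\eta\}$, which exists by continuity of $\tilde u_n$; Lemma~\ref{lem-2} applied at $\tau_n$ again forces $t^*_n-\tau_n>T+\tilde T_0$, and after re-running the compactness extraction with recentering time $\tau_n$, either the new limit $\tau^*=\lim(\tau_n-t^*_n)$ is finite (reducing to the previous case) or, if $\tau^*=-\infty$, a further iteration of Lemma~\ref{lem-2} with successively decreasing thresholds $\eta,\delta_\eta,\delta_{\delta_\eta},\dots$, combined with the smallness $\chi\|\nabla\tilde v\|_\infty\le\chi\tilde M\eta$ from Lemma~\ref{lem-1}, prevents the supremum from dropping away from a fixed positive value over a bounded window preceding $t^*_n$; this yields the desired contradiction. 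This sliding/recentering step is the most delicate part of the proof.
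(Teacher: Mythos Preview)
Your contradiction-plus-compactness strategy is genuinely different from the paper's, and the finite-$\tau^*$ case is fine: recentering at $(x^*_n,t^*_n)$, passing to a limit, and applying the strong maximum principle gives $u^*\equiv 0$ back to time $\tau^*$, which contradicts $\sup_{B_{2L}(y^\infty)}u^*(\cdot,\tau^*)=\eta$. The problem is the case $\tau^*=-\infty$. Your proposed fix---iterating Lemma~\ref{lem-2} with thresholds $\eta,\delta_\eta,\delta_{\delta_\eta},\dots$---does not close the argument. Lemma~\ref{lem-2} only pushes a lower bound forward over the fixed window $T(\cdot)+\tilde T_0$, and the thresholds it produces may strictly decrease at each step; nothing you have written rules out the scenario in which $\sup_{B^n_{2L}}\tilde u_n(\cdot,t)$ slides slowly from $\eta$ at $\tau_n$ down towards $0$ near $t^*_n$ over an unbounded time span, with the chained lower bounds tending to zero. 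The smallness of $\chi\nabla\tilde v$ from Lemma~\ref{lem-1} is mentioned but never actually used to manufacture a uniform positive lower bound on a bounded window before $t^*_n$.

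What is missing is a \emph{growth} mechanism, and this is exactly what the paper supplies. The paper does not argue by contradiction on the conclusion at all. It uses the supersolution inequality you correctly wrote in your first paragraph to compare $\tilde u$ on the cube $D_l\subset B_L$ with the solution $\bar u$ of the linear Dirichlet problem $\bar u_t=\Delta\bar u+c\xi\cdot\nabla\bar u+q\cdot\nabla\bar u+\bar a\,\bar u$ with $q=-\chi\nabla\tilde v$. The principal eigenvalue is $\lambda(c,\bar a)\ge\lambda_0>0$, and a short compactness argument (this is where the smallness of $q$ from Lemma~\ref{lem-1} is actually used) shows that for $\|q\|$ small enough one has $\bar u(\cdot,\tilde T_0)\ge 2\bar\phi$. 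Iterating from $t_1+T+1$, where Lemma~\ref{lem-2} gives $\tilde u\ge\delta_\eta\ge\delta_\eta\bar\phi$ on $D_l$, yields $\tilde u(\cdot,t_1+T+1+k\tilde T_0)\ge 2^k\delta_\eta\,\bar\phi$. Hence at every checkpoint the supremum is at least $\delta_\eta$, and a single further application of Lemma~\ref{lem-2} with the \emph{fixed} threshold $\delta_\eta$ covers the intermediate times, giving $\tilde\delta_\eta=\min\{\delta_\eta,\delta_{\delta_\eta}\}$. In the language of your approach, the doubling together with $\tilde u\le\eta$ forces $2^k\delta_\eta\le\eta$, so $t^*_n-\tau_n\le T+1+(1+\log_2(\eta/\delta_\eta))\tilde T_0$ uniformly in $n$, and the case $\tau^*=-\infty$ simply cannot occur. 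Without this eigenfunction-doubling step your argument is incomplete; with it, the sliding/recentering manoeuvre becomes unnecessary.
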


\begin{proof}
It suffices to prove the lemma for the ball $B_{2L}(0)$ centered at the origin with the radius $2L$. If the ball with the radius $2L$ is not centered at the origin, we can make an appropriate translation of $(\tilde u(x, t;\xi, c, u_0,v_0), \tilde v(x, t;\xi, c, u_0,v_0))$ for the space variable $x$ to shift the ball into the ball centered at the origin.

 First, consider
\begin{equation}
\label{new-new-eqq1}
\begin{cases}
 u_t=\Delta u+c\xi \cdot \nabla u+q(x,t)\cdot \nabla u+\bar au, \quad x\in D_{l}, \,\, t>0\\
 u(x,t)=0, \quad  x\in \p{D_{l}}, \,\,  t>0,\\
 u(x,0)=\bar \phi(x;\xi,c,\bar a), \quad x\in D_{l},
\end{cases}
\end{equation}
where $\bar \phi (x;\xi,c,\bar a)=\frac{\phi(x;\xi,c,\bar a)}{\|\phi\|_\infty}$ and  $\phi(x;\xi, c,\bar a)$ is as in Lemma \ref{lem-001-2}.
Let $ \bar u(x,t;\xi,c,q)$ be the solution of \eqref{new-new-eqq1}. Let $ \tilde T_0\geq 1$ be as in lemma \ref{lem-2}.
We claim that there is $\tilde\epsilon_0>0$ such that for any $-2\sqrt{a}+\epsilon\leq c\leq 2\sqrt{a}-\epsilon$, any $\xi\in S^{N-1}$, any function $q(x,t)$ which is $C^1$ in $x$ {and  H\"older continuous in $t$} with exponent $0<\theta<\frac{1}{2}$,
\begin{equation}
\label{q-eq1}
\sup_{t\ge 0}\|q(\cdot,t)\|_{C(\bar D_{l})}\le
\chi \sqrt{N}\tilde M \tilde\epsilon_0
 \end{equation}
 ($\tilde M$ is as in Lemma \ref{lem-1}),  and
 \begin{equation}
 \label{q-eq2}
 \sup_{{t,s\ge 0, t\not =s}}\frac{\|q(\cdot,t)-q(\cdot,s)\|_{{C(\bar D_{l})}}}{|t-s|^\theta}\le \chi M M_1
 \end{equation}
  ($M$ and $M_1$ are as in Lemma \ref{persistence-lm}), there holds
\begin{equation}
\label{new-new-eqq2}
 \bar u(x, \tilde T_0;\xi,c,q)\ge 2 \bar \phi(x;\xi, c,\bar a)\quad \forall\,\ x\in D_{l}.
\end{equation}
In fact, assume this is not true. Then there are $\epsilon_n\to 0$ as $n\to\infty$, $x_n\in D_{l}$, $\xi_{n}\in S^{N-1}$, $-2\sqrt{a}+\epsilon\leq c_{n}\leq 2\sqrt{a}-\epsilon$,  and $q_n(x,t)$  satisfying \eqref{q-eq2} and
$$
\sup_{t\ge 0}\|q_{n}(\cdot,t)\|_{C(\bar D_{l})}\le
\chi \sqrt{N}\tilde M \epsilon_n
$$
such that
\begin{equation}\label{u-bar-eq}
\bar u(x_n, \tilde T_0;\xi_{n}, c_n, q_{n})< 2 \bar \phi(x_n;\xi_{n}, c_{n},\bar a) \quad \forall\,\ n\geq 1.
\end{equation}
Let $u_n(x,t)= \bar u(x,t;\xi_{n}, c_n,q_n)$. Without loss of generality, we may assume that
$$
u_n(x,t)\to u^*(x,t),\quad \p_{x_j} u_n(x,t)\to \p_{x_j} u^*(x,t)\quad  {\rm as} \quad n\to\infty
$$ locally uniformly in $(x,t)\in \bar D_{l}\times [0,\infty)$, $\xi_{n}\to \xi^{*}$ and $c_n\to c^*$ as $n\to\infty$ for some $\xi^{*}\in S^{N-1}$, $-2\sqrt{a}+\epsilon\leq c^{*}\leq 2\sqrt{a}-\epsilon$.
Note that
$u^*(x,t)= \bar u(x,t;\xi^{*}, c^{*},0)=e^{\lambda(c^{*},\bar a)t}\bar \phi(x;\xi^{*}, c^{*},\bar a)$. Hence
$$
u^*(x, \tilde T_0)\ge e^{\lambda_{0}\tilde T_{0}}\bar \phi(x;\xi^{*}, c^{*},\bar a)\ge 4\bar  \phi(x;\xi^{*}, c^{*},\bar a),\quad \forall\,\ x\in D_{l}.
$$
This together with the Hopf's Lemma implies that
$$
u_n(x, \tilde T_0)\ge 2\bar \phi(x;\xi_{n}, c_{n},\bar a)\quad \forall\,\ x\in D_{l},\,\,\, n\gg 1,
$$
which contradicts to \eqref{u-bar-eq}. Hence the claim holds true.

Next, without loss of generality, we may assume that
$$
a-\tilde a-b\tilde \epsilon_{0}\geq \bar a.
$$
Let $T=T(\eta)$.
By Lemma \ref{lem-1}, for any given
  $0<\eta\le \tilde\epsilon_0$, $\xi\in S^{N-1}$, $-2\sqrt{a}+\epsilon\leq c\leq 2\sqrt{a}-\epsilon$,
  $t_1+T+1\le t< t_2\leq \infty$, and $x\in B_{L}(0)$,
\begin{align*}\label{u-sup-eq}
\tilde u_{t}&=\Delta\tilde u+c\xi\cdot\nabla\tilde u - \chi \nabla\tilde v\cdot\nabla\tilde u+\tilde u(a-\chi \Delta\tilde v -b\tilde u)\cr
&\geq \Delta\tilde u+c\xi\cdot\nabla\tilde u+q(x,t)\cdot\nabla\tilde u+\bar a \tilde u,
\end{align*}
where $q(x,t)=- \chi  \nabla \tilde v(x,t;\xi, c, u_0,v_0)$. By Lemma \ref{persistence-lm} and Lemma \ref{lem-1}, $q(\cdot,\cdot{+t_1+T+1})$ satisfies \eqref{q-eq1} and \eqref{q-eq2}.
Let $n_0\ge 0$ be such that
  $$t_1+T+1+ n_0 \tilde T_0<t_2\quad {\rm and}\quad t_1+T+1+(n_0+1) \tilde T_0\ge t_2.
  $$
 By Lemma \ref{lem-2},
$$
\inf_{x\in B_{2L}(0)} \tilde u(x,t;\xi, c, u_0,v_0)\ge \delta_\eta\quad \forall\, t_1\le t\le t_1+T+1.
$$
This together with
 the comparison principle for parabolic equations and \eqref{new-new-eqq2} implies that for any $-2\sqrt{a}+\epsilon\leq c\leq 2\sqrt{a}-\epsilon$, any $\xi\in S^{N-1}$,
\begin{align*}
\tilde u(x,t_1+T+1+k \tilde T_0;\xi, c, u_0,v_0)&\geq  2^{k-1}\delta_\eta  \bar u(x,\tilde T_0;\xi, c,q(\cdot,\cdot+t_1+T+1+(k-1) \tilde T_0))\cr
&\geq  2^{k} \delta_{\eta}\bar  \phi(x;\xi, c, \bar a) \quad \forall\,\ x\in D_{l}
\end{align*}
for $k=1,2,\cdots,n_0$, where $\delta_\eta$ is as in Lemma \ref{lem-2}.
By Lemma \ref{lem-2} again, we then have for any $-2\sqrt{a}+\epsilon\leq c\leq 2\sqrt{a}-\epsilon$, any $\xi\in S^{N-1}$,
$$
\inf_{x\in B_{2L}(0)}\tilde u(x,t;\xi, c, u_0,v_0)\ge \tilde\delta_\eta:=\min\{\delta_\eta,\delta_{\delta_\eta}\}\quad \forall \, t_1\le t<t_2.
$$
\end{proof}

\subsection{Proof of Theorem \ref{spreading-thm} (1)}

In this subsection, we prove  Theorem \ref{spreading-thm} (1). Throughout this subsection, let $(u_{0}, v_{0})\in C_{cp}^{+}\times C_{cp}^{+,1}$ be fixed.

\begin{proof}[Proof of Theorem \ref{spreading-thm} (1)]

We first prove that
for any $0<\epsilon<\sqrt{a}$,
\begin{equation}\label{lower-cp-u-1-1}
\liminf_{t\to\infty}\inf_{|x|\leq (2\sqrt{a}-\epsilon)t}u(x,t;u_0,v_0)>0.
\end{equation}
For any $0<\epsilon<\sqrt{a}$, let $\bar a$ and $l$ be as in \eqref{bar-r-eq} and \eqref{L-eq}.
Let $T_{0}=T_{0}(u_0,v_0)$ and $\tilde \epsilon_0$ be as in Lemma \ref{lem-2.2} and Lemma \ref{lem-3}, respectively. Let $T(\tilde \epsilon_0)$ be such that \eqref{T-choice} holds.
 For any $-2\sqrt{a}+\epsilon\leq c\leq 2\sqrt{a}-\epsilon$, any $\xi\in S^{N-1}$, let
$$
\tilde \delta:=\tilde \delta(\xi, c)=\inf_{x\in \bar D_{l}} \tilde u(x,T_0+T(\tilde \epsilon_0)+3;\xi, c, u_0,v_0).
$$
By the assumption $u_0(x)\geq 0$ and $u_0(x)\not\equiv 0$,  {$\tilde \delta>0$}. Let
$$
k_0=\inf\{k\in \Z^+\,|\, 2^{k}{\tilde \delta}\ge \tilde\epsilon_0\}
\quad {\rm and}\quad
T_{00}=T_0+T(\tilde\epsilon_0){+3}+k_0  \tilde T_0,
$$
where $ \tilde T_0\geq 1$ is as in lemma \ref{lem-2}.
We claim that for any $-2\sqrt{a}+\epsilon\leq c\leq 2\sqrt{a}-\epsilon$, any $\xi\in S^{N-1}$,
\begin{equation}
\label{step5-eq-1}
\inf_{|x|\leq 2L(\tilde\epsilon_0)}\tilde u(x,t;\xi, c, u_0,v_0)\ge \min\{\delta_{\tilde\epsilon_0},\tilde\delta_{\tilde\epsilon_0}\}\quad \forall\, t\ge T_{00}.
\end{equation}

To prove the claim, for any given $-2\sqrt{a}+\epsilon\leq c\leq 2\sqrt{a}-\epsilon$, $\xi\in S^{N-1}$,
let
$$
I=\{t> T_0{+2}\,\,\,|\, \sup_{|x|\leq 2L(\tilde\epsilon_0)}\tilde u(x,t;\xi, c, u_0,v_0)<\tilde\epsilon_0\}.
$$
Note that $I$ is an open set. By Lemma \ref{lem-2},
\begin{equation}
\label{step5-eq0-1}
\inf_{|x|\leq 2L(\tilde\epsilon_0)}\tilde u(x,t;\xi, c, u_0,v_0)\ge \delta_{\tilde\epsilon_0}\quad \forall\, t\not \in I\,\,\, {{\rm for}\, t>T_0+2}.
\end{equation}
Hence, if $I=\emptyset$, then
\begin{equation}
\label{step5-eq1-1}
\inf_{|x|\leq 2L(\tilde\epsilon_0)} \tilde u(x,t;\xi, c, u_0,v_0)\ge \delta_{\tilde\epsilon_0}\quad \forall\,\  t\ge T_0+2.
\end{equation}

If $I\not=\emptyset$, then $I=\cup (a_i,b_i)$. If $a_i\not =T_0+2$,  then
$$
\sup_{|x|\leq 2L(\tilde\epsilon_0)} \tilde u(x,a_i;\xi, c, u_0,v_0)=\tilde\epsilon_0
\quad{\rm and}\quad
\sup_{|x|\leq 2L(\tilde\epsilon_0)}\tilde u(x,t;\xi, c, u_0,v_0)<\tilde\epsilon_0\quad \forall\, t\in (a_i,b_i).
$$
 By Lemma \ref{lem-3},
 \begin{equation}
 \label{step5-eq2-1}
 \inf_{|x|\leq 2L(\tilde\epsilon_0)} \tilde u(x,t;\xi, c, u_0,v_0)\ge \tilde\delta_{\tilde\epsilon_0}\quad \forall\, t\in (a_i,b_i)
 \,\, {\rm for}\,\, a_i\not = T_0+2.
 \end{equation}
 If $a_i=T_0+2$,
 by the arguments in Lemma \ref{lem-3},  there holds
\begin{align*}
\tilde u(x,T_0+T(\tilde\epsilon_0)+3+k \tilde T_0;\xi, c, u_0,v_0)\geq  2^{k} \tilde\delta \bar \phi(x;\xi, c,\bar a) \quad \forall\,\ x\in D_{l}
\end{align*}
for $k=0,1,2,\cdots, k_0$. This implies that $b_i\le T_{00}$. This together with \eqref{step5-eq0-1},
\eqref{step5-eq1-1}, and \eqref{step5-eq2-1}
implies \eqref{step5-eq-1}.

By \eqref{step5-eq-1} and $\tilde u(x,t;\xi, c, u_0,v_0)=u(x+ct\xi, t;u_0,v_0)$, we have for any $-2\sqrt{a}+\epsilon\leq c\leq 2\sqrt{a}-\epsilon$, any $\xi\in S^{N-1}$,
\begin{equation*}
\label{new-step5-eq1-1}
\inf_{|x-ct\xi|\leq 2L(\tilde\epsilon_0)} u(x,t; u_0,v_0)\ge \min\{\delta_{\tilde\epsilon_0},\tilde\delta_{\tilde\epsilon_0}\}\quad \forall\, t\ge T_{00}.
\end{equation*}
Thus for any $t\ge T_{00}$, any $|x|\leq (2\sqrt{a}-\epsilon)t$, there exist $c=\frac{|x|}{t}$ and $\xi=\frac{x}{|x|}$ such that $|x-ct\xi|\leq 2L(\tilde\epsilon_{0})$, it then holds that
\begin{equation*}
\label{new-step5-eq2-1}
u(x,t; u_0,v_0)\ge \min\{\delta_{\tilde\epsilon_0},\tilde\delta_{\tilde\epsilon_0}\},
\end{equation*}
which implies that
$$
\inf_{|x|\leq (2\sqrt{a}-\epsilon)t} u(x,t; u_0,v_0)\ge \min\{\delta_{\tilde\epsilon_0},\tilde\delta_{\tilde\epsilon_0}\} \quad \forall\,\, t\ge T_{00}.
$$
Hence,
$$
\liminf_{t\to\infty}\inf_{|x|\leq (2\sqrt{a}-\epsilon)t} u(x,t; u_0,v_0)\ge \min\{\delta_{\tilde\epsilon_0},\tilde\delta_{\tilde\epsilon_0}\}.
$$
\eqref{lower-cp-u-1-1} is thus proved.

Finally, we prove that for any $0<\epsilon<\sqrt{a}$,
\begin{equation}\label{lower-cp-v-1-1}
\liminf_{t\to\infty}\inf_{|x|\leq (2\sqrt{a}-\epsilon)t}v(x,t;u_0,v_0)>0.
\end{equation}

Suppose by contraction that the result does not hold. Then there are constant $0<\epsilon<\sqrt{a}$ and a sequence $\{(x_{n}, t_{n})\}_{n\in \N}$ such that $t_{n}\to\infty$ as $n\to\infty$, $|x_{n}|\leq (2\sqrt{a}-\epsilon)t_{n}$, and
\begin{equation}\label{v-seq-0}
v(x_{n}, t_{n}; u_0, v_0)\to 0 \quad {\rm as }\,\,\,\, n\to\infty.
\end{equation}
For every $n\geq 1$, let us define
$$
u_{n}(x,t)=u(x+x_{n}, t+t_{n};u_0, v_0),\quad {\rm and} \quad v_{n}(x,t)=v(x+x_{n}, t+t_{n}; u_0, v_0)
$$
for every $x\in\R^{N}$, $t\geq -t_{n}$. By a prior estimates for parabolic equations, without loss of generality, we may assume that $(u_{n}(x,t),v_{n}(x,t))\to (u^{*}(x,t), v^{*}(x, t))$ locally uniformly in $C^{2,1}(\R^{N}\times\R)$. Furthermore, $(u^{*}(t,x), v^{*}(t,x))$ is an entire solution of
\begin{equation*}\label{u-Eq-1}
\begin{cases}
u_{t}=\Delta u - \chi\nabla\cdot(u\nabla v)+ u(a-bu),\quad  x\in\R^N, \,\, t\in \R\\
{v_t}=\Delta v-\lambda v+\mu u,\quad x\in\R^N, \,\, t\in\R.
\end{cases}
\end{equation*}

Choose $0<\tilde \epsilon<\epsilon$. For every $x\in \R^{N}$ and $t\in\R$, we have
\begin{align*}
|x+x_{n}|&\leq |x|+|x_{n}|\leq |x|+(2\sqrt{a}-\epsilon)t_{n}\cr
&=(2\sqrt{a}-\tilde\epsilon)(t_{n}+t)-(\epsilon-\tilde \epsilon)(t_{n}-\frac{|x|-(2\sqrt{a}-\tilde \epsilon)t}{\epsilon-\tilde \epsilon})\cr
&\leq (2\sqrt{a}-\tilde\epsilon)(t_{n}+t)
\end{align*}
whenever $t_{n}\geq \frac{|x|+(2\sqrt{a}-\tilde \epsilon)t}{\epsilon-\tilde \epsilon}$.  By \eqref{lower-cp-u-1-1},
$$
u^{*}(x, t)=\lim_{n\to\infty} u(x+x_{n}, t+t_{n}; u_0, v_0)\geq \liminf_{s\to\infty}\inf_{|y|\leq (2\sqrt{a}-\tilde \epsilon)s}u(y,s;u_0,v_0)>0
$$
for every $(x,t)\in \R^{N}\times \R$. It follows from comparison principle for parabolic equations that $v^{*}(x,t)>0$ for every $(x,t)\in \R^{N}\times \R$.
In particular, $v^{*}(0,0)>0$, which contradicts to \eqref{v-seq-0}.
\end{proof}

\subsection{Proof of Theorem \ref{spreading-thm-1} (1)}

In this subsection, we prove Theorem \ref{spreading-thm-1} (1). Throughout this subsection, let $\xi\in S^{N-1}$ and $(u_{0}, v_{0})\in C_{fl}^{+}(\xi)\times C_{fl}^{+,1}(\xi)$ be fixed.

\begin{proof}[Proof of Theorem \ref{spreading-thm-1} (1)]

We first prove that
for any $0<\epsilon<\sqrt{a}$,
\begin{equation}\label{lower-fl-u-2}
\liminf_{t\to\infty}\inf_{x\cdot\xi\leq (2\sqrt{a}-\epsilon)t}u(x,t;u_0,v_0)>0.
\end{equation}
Let $\tilde u(x,t)=u(x+(2\sqrt{a}-\epsilon)t\xi, t)$ and $\tilde v(x,t)=v(x+(2\sqrt{a}-\epsilon)t\xi, t)$. Then $(\tilde u(x,t), \tilde v(x,t))$ solves \eqref{tilde-u-v-eq} with $c$ being replaced by $2\sqrt{a}-\epsilon$. $(\tilde u(x,t;\xi,  u_0,v_0), \tilde v(x,t;\xi,  u_0,v_0))$ denotes the classical solution of \eqref{tilde-u-v-eq} with $c$ being replaced by $2\sqrt{a}-\epsilon$ and $(\tilde u(x,0;\xi,  u_0,v_0), \tilde v(x,0;\xi,  u_0,v_0))=(u_0, v_0)$.
Let $T_{0}=T_{0}(u_0,v_0)$ and $\tilde \epsilon_0$ be as in Lemma \ref{lem-2.2} and Lemma \ref{lem-3}, respectively. Let $T(\tilde \epsilon_0)$ be such that \eqref{T-choice} holds.
Let
$$
\tilde \delta=\inf_{x\cdot\xi\leq 2L(\tilde\epsilon_0)} \tilde u(x,T_0+T(\tilde \epsilon_0)+3;\xi, u_0,v_0).
$$
Since $\liminf_{x\cdot\xi\to -\infty}u_{0}(x)>0$, {$\tilde \delta>0$}. Let
$$
k_0=\inf\{k\in \Z^+\,|\, 2^{k}{\tilde \delta}\ge \tilde\epsilon_0\}
\quad {\rm and}\quad
T_{00}=T_0+T(\tilde\epsilon_0){+3}+k_0  \tilde T_0.
$$
where $ \tilde T_0\geq 1$ is as in Lemma \ref{lem-2}.
 By the similar arguments used in the proof of \eqref{step5-eq-1}, we can prove that  for any ball $B_{2L(\tilde \epsilon_0)}\subset \{x\,\,|\,\,x\cdot\xi<2L(\tilde \epsilon_0)\}$ with radius $2L(\tilde \epsilon_0)$, it holds that
\begin{equation*}
\label{step5-eq-2}
\inf_{x\in B_{2L(\tilde \epsilon_0)}}\tilde u(x,t;\xi, u_0,v_0)\ge \min\{\delta_{\tilde\epsilon_0},\tilde\delta_{\tilde\epsilon_0}\}\quad \forall\, t\ge T_{00}.
\end{equation*}

For any $x\in \{x\,\,|\,\,x\cdot\xi<2L(\tilde \epsilon_0)\}$, there exists a ball $B_{2L(\tilde \epsilon_0)}\subset \{x\,\,|\,\,x\cdot\xi<2L(\tilde \epsilon_0)\}$ such that $x\in B_{2L(\tilde \epsilon_0)}$, we then obtain that
$$
\tilde u(x,t;\xi, u_0,v_0)\geq \inf_{x\in B_{2L(\tilde \epsilon_0)}}\tilde u(x,t;\xi, u_0,v_0)\ge \min\{\delta_{\tilde\epsilon_0},\tilde\delta_{\tilde\epsilon_0}\}\quad \forall\, t\ge T_{00},
$$
which implies that
\begin{equation}\label{inf-tilde-u-2}
\inf_{x\cdot\xi<2L(\tilde \epsilon_0)} \tilde u(x,t;\xi, u_0,v_0)\geq \min\{\delta_{\tilde\epsilon_0},\tilde\delta_{\tilde\epsilon_0}\}>0 \quad \forall\, t\ge T_{00}.
\end{equation}

By \eqref{inf-tilde-u-2} and $\tilde u(x,t;\xi, u_0,v_0)=u(x+(2\sqrt{a}-\epsilon)t\xi, t; u_0,v_0)$, we have
\begin{equation*}
\inf_{x\cdot\xi< (2\sqrt{a}-\epsilon)t+2L(\tilde\epsilon_0)} u(x,t; u_0,v_0)\ge \min\{\delta_{\tilde\epsilon_0},\tilde\delta_{\tilde\epsilon_0}\}\quad \forall\, t\ge T_{00}.
\end{equation*}
Hence,
$$
\liminf_{t\to\infty} \inf_{x\cdot\xi\leq (2\sqrt{a}-\epsilon)t} u(x,t; u_0,v_0)\ge \min\{\delta_{\tilde\epsilon_0}, \tilde\delta_{\tilde\epsilon_0}\}.
$$
\eqref{lower-fl-u-2} is thus proved.

Finally, it can be proved by the similar arguments used in proving \eqref{lower-cp-v-1-1} that for any $0<\epsilon<\sqrt{a}$,
\begin{equation*}\label{lower-cp-v-2-2}
\liminf_{t\to\infty}\inf_{x\cdot\xi\leq (2\sqrt{a}-\epsilon)t}v(x,t;u_0,v_0)>0.
\end{equation*}
\end{proof}

\subsection{Proof of Theorem \ref{spreading-thm-2} (1)}

In this subsection, we prove Theorem \ref{spreading-thm-2} (1). Throughout this subsection, let $\xi\in S^{N-1}$ and $(u_{0}, v_{0})\in C^{+}(\xi)\times C^{+,1}(\xi)$ be fixed.
\begin{proof}[Proof of Theorem \ref{spreading-thm-2} (1)]

We first prove that
for any $0<\epsilon<\sqrt{a}$,
\begin{equation}\label{lower-cp-u-3}
\liminf_{t\to\infty}\inf_{|x\cdot\xi|\leq (2\sqrt{a}-\epsilon)t}u(x,t;u_0,v_0)>0.
\end{equation}

Let $T_{0}=T_{0}(u_0,v_0)$ and $\tilde \epsilon_0$ be as in Lemma \ref{lem-2.2} and Lemma \ref{lem-3}, respectively. Let $T(\tilde \epsilon_0)$ be such that \eqref{T-choice} holds.
For any $-2\sqrt{a}+\epsilon\leq c\leq 2\sqrt{a}-\epsilon$,
let
$$
\tilde \delta:=\tilde \delta(\xi,c)=\inf_{|x\cdot\xi|\leq 2L(\tilde\epsilon_0)} \tilde u(x,T_0+T(\tilde \epsilon_0)+3;\xi, c, u_0, v_0).
$$
Since there exists $r>0$ such that $\inf_{|x\cdot\xi|<r}u_{0}(x)>0$, $\tilde \delta>0$. Let
$$
k_0=\inf\{k\in \Z^+\,|\, 2^{k}{\tilde \delta}\ge \tilde\epsilon_0\}
\quad {\rm and}\quad
T_{00}=T_0+T(\tilde\epsilon_0){+3}+k_0  \tilde T_0.
$$
where $ \tilde T_0\geq 1$ is as in lemma \ref{lem-2}.
By the similar arguments used in the proof of \eqref{step5-eq-1}, we can prove that 
for any $-2\sqrt{a}+\epsilon\leq c\leq 2\sqrt{a}-\epsilon$, any ball $B_{2L(\tilde \epsilon_0)}\subset \{x\,\,|\,\,|x\cdot\xi|<2L(\tilde \epsilon_0)\}$ with radius $2L(\tilde \epsilon_0)$, it holds that
\begin{equation*}
\label{step5-eq-3}
\inf_{x\in B_{2L(\tilde \epsilon_0)}}\tilde u(x,t;\xi, c, u_0,v_0)\ge \min\{\delta_{\tilde\epsilon_0},\tilde\delta_{\tilde\epsilon_0}\}\quad \forall\, t\ge T_{00}.
\end{equation*}


For any $x\in \{x\,\,|\,\,|x\cdot\xi|<2L(\tilde \epsilon_0)\}$, there exists a ball $B_{2L(\tilde \epsilon_0)}\subset \{x\,\,|\,\,|x\cdot\xi|<2L(\tilde \epsilon_0)\}$ such that $x\in B_{2L(\tilde \epsilon_0)}$, we then obtain that for any $-2\sqrt{a}+\epsilon\leq c\leq 2\sqrt{a}-\epsilon$,
$$
\tilde u(x,t;\xi, c, u_0,v_0)\geq \inf_{x\in B_{2L(\tilde \epsilon_0)}}\tilde u(x,t;\xi, c, u_0,v_0)\ge \min\{\delta_{\tilde\epsilon_0},\tilde\delta_{\tilde\epsilon_0}\}\quad \forall\, t\ge T_{00},
$$
which implies that for any $-2\sqrt{a}+\epsilon\leq c\leq 2\sqrt{a}-\epsilon$,
\begin{equation}\label{inf-tilde-u-1-3}
\inf_{|x\cdot\xi|<2L(\tilde \epsilon_0)} \tilde u(x,t;\xi, c, u_0,v_0)\geq \min\{\delta_{\tilde\epsilon_0},\tilde\delta_{\tilde\epsilon_0}\}>0 \quad \forall\, t\ge T_{00}.
\end{equation}

By \eqref{inf-tilde-u-1-3} and $\tilde u(x,t;\xi, c, u_0,v_0)=u(x+ct\xi, t; u_0,v_0)$, we have for any $-2\sqrt{a}+\epsilon\leq c\leq 2\sqrt{a}-\epsilon$,
\begin{equation*}
\label{new-step-eq2-3}
\inf_{|x\cdot\xi-ct|< 2L(\tilde\epsilon_0)} u(x,t; u_0,v_0)\ge \min\{\delta_{\tilde\epsilon_0},\tilde\delta_{\tilde\epsilon_0}\}\quad \forall\, t\ge T_{00}.
\end{equation*}
For any $t\ge T_{00}$, any $x\in \{x\,| \, |x\cdot\xi|\leq (2\sqrt{a}-\epsilon)t\}$, there exists $c=\frac{x\cdot\xi}{t}$ such that $x\in
\{x\,| \, |x\cdot\xi-ct|<2L(\tilde \epsilon_{0})\}$, it then holds that
\begin{equation*}
u(x, t; u_0,v_0)\ge \min\{\delta_{\tilde\epsilon_0},\tilde\delta_{\tilde\epsilon_0}\},
\end{equation*}
which implies that
$$
\inf_{|x\cdot\xi|\leq (2\sqrt{a}-\epsilon)t} u(x,t; u_0,v_0)\ge \min\{\delta_{\tilde\epsilon_0},\tilde\delta_{\tilde\epsilon_0}\} \quad \forall\,\, t\ge T_{00}.
$$
Hence,
$$
\liminf_{t\to\infty}\inf_{|x\cdot\xi|\leq (2\sqrt{a}-\epsilon)t} u(x, t; u_0,v_0)\ge \min\{\delta_{\tilde\epsilon_0},\tilde\delta_{\tilde\epsilon_0}\}.
$$
\eqref{lower-cp-u-3} is thus proved.

Finally, 
it can be proved by the similar arguments used in proving \eqref{lower-cp-v-1-1} that for any $0<\epsilon<\sqrt{a}$,
\begin{equation*}\label{lower-cp-v-2}
\liminf_{t\to\infty}\inf_{|x\cdot\xi|\leq (2\sqrt{a}-\epsilon)t}v(x,t;u_0,v_0)>0.
\end{equation*}
\end{proof}

\section{Upper bounds of spreading speeds}

This section is devoted to the study of upper bounds of the spreading speeds of  global classical solutions of \eqref{Main-Eq} with different initial functions and prove Theorem \ref{spreading-thm} (2), Theorem \ref{spreading-thm-1} (2) and Theorem \ref{spreading-thm-2} (2).
Throughout this section, we assume that $b>\frac{N\mu\chi}{4}$.

First, we present a lemma.

\begin{lem}
\label{upper-bound-lm1}
Let $w=u+\frac{\chi}{2\mu}|\nabla v|^2$.
Then
$$
w_t\le \Delta w+a w.
$$
\end{lem}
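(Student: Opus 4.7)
The idea is to differentiate $w = u + \frac{\chi}{2\mu}|\nabla v|^2$ in time, use the two PDEs together with the Bochner-type identity $\Delta |\nabla v|^2 = 2|D^2 v|^2 + 2\nabla v\cdot \nabla \Delta v$ to bring out a $\Delta w$ term, and then absorb the leftover terms using the structural assumption $b > \frac{N\mu\chi}{4}$.

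First I would compute
\begin{equation*}
u_t = \Delta u - \chi\,\nabla u\cdot\nabla v - \chi u\,\Delta v + au - bu^2,
\end{equation*}
and, from $\nabla v_t = \nabla \Delta v - \lambda \nabla v + \mu \nabla u$, obtain
\begin{equation*}
\tfrac{1}{2}\partial_t|\nabla v|^2 = \nabla v \cdot \nabla \Delta v - \lambda |\nabla v|^2 + \mu \nabla v\cdot \nabla u.
\end{equation*}
Substituting $2\nabla v\cdot \nabla \Delta v = \Delta|\nabla v|^2 - 2|D^2 v|^2$ and forming $w_t = u_t + \frac{\chi}{2\mu}\partial_t|\nabla v|^2$, the cross term $-\chi\nabla u\cdot\nabla v$ from $u_t$ exactly cancels the $+\chi \nabla v\cdot \nabla u$ coming from the gradient of $v_t$. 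What remains is
\begin{equation*}
w_t = \Delta w + au - \chi u\,\Delta v - bu^2 - \frac{\chi}{\mu}|D^2 v|^2 - \frac{\chi\lambda}{\mu}|\nabla v|^2.
\end{equation*}

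Since $-\frac{\chi\lambda}{\mu}|\nabla v|^2 \le 0$ and $aw \ge au$, it suffices to show
\begin{equation*}
-\chi u\,\Delta v - bu^2 - \frac{\chi}{\mu}|D^2 v|^2 \le 0.
\end{equation*}
Here I would invoke the Cauchy--Schwarz inequality $(\Delta v)^2 = \bigl(\sum_i \partial_{ii}v\bigr)^2 \le N\,|D^2v|^2$, which yields $-\frac{\chi}{\mu}|D^2v|^2 \le -\frac{\chi}{N\mu}(\Delta v)^2$. It then remains to verify the pointwise inequality
\begin{equation*}
Q(u,\Delta v) := -bu^2 - \chi u\,\Delta v - \tfrac{\chi}{N\mu}(\Delta v)^2 \le 0,
\end{equation*}
which is a quadratic form in $(u,\Delta v)$ with matrix having negative diagonal and determinant $\frac{\chi}{N\mu}\bigl(b - \frac{N\mu\chi}{4}\bigr) \ge 0$ under the standing hypothesis $b > \frac{N\mu\chi}{4}$. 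Hence $Q$ is negative semidefinite and the desired inequality $w_t \le \Delta w + aw$ follows.

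The main obstacle, and the only place where the hypothesis $b > \frac{N\mu\chi}{4}$ is genuinely used, is the last step: one must simultaneously control the bad term $-\chi u\Delta v$ by the two good terms $-bu^2$ and $-\frac{\chi}{\mu}|D^2v|^2$. The key insight is that $|D^2v|^2$ dominates $\frac{1}{N}(\Delta v)^2$, which is precisely what makes the discriminant condition match the threshold $\frac{N\mu\chi}{4}$; a weaker bound such as $|D^2v|^2 \ge 0$ would not suffice.
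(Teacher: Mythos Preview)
Your argument is correct and matches the paper's approach. The paper's proof merely cites \cite{ShXu} for the intermediate inequality
\[
w_t \le \Delta w - \frac{\chi\lambda}{\mu}|\nabla v|^2 - \Big(b - \frac{N\mu\chi}{4}\Big)u^2 + au,
\]
which is exactly what your computation yields once you apply Young's inequality $-\chi u\,\Delta v \le \frac{N\mu\chi}{4}u^2 + \frac{\chi}{N\mu}(\Delta v)^2$ (equivalent to your negative-semidefiniteness check) and then absorb $\frac{\chi}{N\mu}(\Delta v)^2$ into $\frac{\chi}{\mu}|D^2v|^2$ via $(\Delta v)^2 \le N|D^2v|^2$; you have simply written out the details that the paper delegates to the earlier reference.
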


\begin{proof}
Similar arguments to those used in the proof of \cite[Theorem 1.2]{ShXu} yield that
\begin{align*}
\label{new-eq0}
\frac{d}{dt}\big[u+ \frac{\chi}{2\mu}\left|\nabla v \right|^2 \big]
&\leq \Delta \big[u+\frac{\chi}{2\mu}\left|\nabla v \right|^2\big]-\frac{\chi\lambda}{\mu}\left|\nabla v\right|^2-\Big(b-\frac{N\mu\chi}{4}\Big)u^2+au.
\end{align*}
Since $b>\frac{N\mu\chi}{4}$, then
\begin{equation}\label{u-gv}
\frac{d}{dt}\big[u+ \frac{\chi}{2\mu}\left|\nabla v \right|^2 \big]\leq \Delta \big[u+\frac{\chi}{2\mu}\left|\nabla v \right|^2\big]+a\big[u+\frac{\chi}{2\mu}\left|\nabla v \right|^2\big]
\end{equation}
The lemma then follows from \eqref{u-gv}.
\end{proof}

We now prove Theorem \ref{spreading-thm} (2)

\begin{proof}[Proof of Theorem \ref{spreading-thm} (2)] First of all, for any given $(u_{0}, v_{0})\in C_{cp}^{+}\times C_{cp}^{+,1}$ and
$0<k<\sqrt a$, let $M>0$ be such that
\begin{equation*}
\label{new-new-eq1}
u_0(x)+\frac{\chi}{2\mu}|\nabla v_0(x)|^2\le \min\{M e^{-kx\cdot\xi},\,\  \xi\in S^{N-1}\}\quad \forall\, x\in\R^N.
\end{equation*}
Let
$$
c=\frac{k^2+a}{k},
$$
and
$$
U(x,t,\xi)=M e^{-k(x\cdot\xi-ct)}.
$$
Write $u=u(x,t;u_0,v_0)$, $v=v(x,t;u_0,v_0)$. Let $w=u+\frac{\chi}{2\mu}|\nabla v|^2$. By Lemma \ref{upper-bound-lm1},
$$
w_t\le \Delta w+aw.
$$
It follows from comparison principle for parabolic equations that
\begin{equation}\label{sup-solu-1-1}
u(x,t;u_0,v_0)\leq U(x,t,\xi)\quad \forall\, x\in\R^N,\,\ t>0,\,\ \xi\in S^{N-1}.
\end{equation}
Let $\xi=\frac{x}{|x|}$, then
$$
u(x,t;u_0,v_0)\leq M e^{-k(|x|-ct)}\quad \forall\, x\in\R^N,\,\ t>0.
$$
For any $\varepsilon>0$, there exists $0<k<\sqrt{a}$ such that $2\sqrt{a}+\varepsilon>c$, it then holds that
\begin{equation}\label{u-upp}
\lim_{t\to\infty}\sup_{|x|\geq (2\sqrt{a}+\varepsilon)t}u(x,t;u_0,v_0)=0.
\end{equation}

Next, we prove that for any $\varepsilon>0$,
\begin{equation}\label{v-upp}
\lim_{t\to\infty}\sup_{|x|\geq (2\sqrt{a}+\varepsilon)t}v(x,t;u_0,v_0)=0.
\end{equation}
Let $d\geq \frac{\mu M}{a+\lambda}$ be such that
$$
v_0(x)\le \min\{d e^{-kx\cdot\xi},\,\  \xi\in S^{N-1}\} \quad \forall\, x\in\R^N.
$$
By the second equation of \eqref{Main-Eq} and \eqref{sup-solu-1-1},
$$
{v_t}=\Delta v -\lambda v+\mu u\leq \Delta v -\lambda v+\mu Me^{-k(x\cdot\xi-ct)}.
$$
Direct computation yields that $de^{-k(x\cdot\xi-ct)}$ satisfies that
$$
\frac{\partial}{\partial t}(de^{-k(x\cdot\xi-ct)})\geq \Delta (de^{-k(x\cdot\xi-ct)})-\lambda (de^{-k(x\cdot\xi-ct)})+\mu Me^{-k(x\cdot\xi-ct)}.
$$
It follows from comparison principle for parabolic equations again that
\begin{equation*}\label{sup-solu-1}
v(x,t;u_0,v_0)\leq de^{-k(x\cdot\xi-ct)}\quad \forall\, x\in\R^N,\,\ t>0,\,\ \xi\in S^{N-1}.
\end{equation*}
Similar arguments as in deriving \eqref{u-upp} yield that \eqref{v-upp} holds.
\end{proof}

Next, we prove Theorem \ref{spreading-thm-1} (2).

\begin{proof}[Proof of Theorem \ref{spreading-thm-1} (2)]
For any given $\xi\in S^{N-1}$, $(u_{0}, v_{0})\in C_{fl}^{+}(\xi)\times C_{fl}^{+,1}(\xi)$ and
$0<k<\sqrt a$, let $$c=\frac{k^2+a}{k}, $$
and $M>0$ be such that
\begin{equation*}
\label{new-new-eq1}
u_0(x)+\frac{\chi}{2\mu}|\nabla v_0(x)|^2\le M e^{-kx\cdot\xi} \quad \forall\,\ x\in\R^N.
\end{equation*}
Let $d\geq \frac{\mu M}{a+\lambda}$ be such that
$$
v_0(x)\le d e^{-kx\cdot\xi} \quad \forall\,\ x\in\R^N.
$$
By similar arguments as those in Theorem \ref{spreading-thm} (2), we can prove that
$$
u(x,t;u_0,v_0)\leq M e^{-k(x\cdot\xi-ct)}\quad \forall\, x\in\R^N,\,\ t>0
$$
and
$$
v(x,t;u_0,v_0)\leq d e^{-k(x\cdot\xi-ct)}\quad \forall\, x\in\R^N,\,\ t>0
$$
For any $\varepsilon>0$, there exists $0<k<\sqrt{a}$ such that $2\sqrt{a}+\varepsilon>c$, Theorem \ref{spreading-thm-1} (2) thus follows.
\end{proof}

At the end of this section, we prove Theorem \ref{spreading-thm-2} (2).

\begin{proof}[Proof of Theorem \ref{spreading-thm-2} (2)]

For any given $\xi\in S^{N-1}$, $(u_{0}, v_{0})\in C^{+}(\xi)\times C^{+,1}(\xi)$ and
$0<k<\sqrt a$, let
$$c=\frac{k^2+a}{k},$$
and $M>0$ be such that
\begin{equation*}
\label{new-new-eq1}
u_0(x)+\frac{\chi}{2\mu}|\nabla v_0(x)|^2\le \min\{M e^{-kx\cdot\xi},\, M e^{kx\cdot\xi}\} \quad \forall\,\ x\in\R^N.
\end{equation*}
Let $d\geq \frac{\mu M}{a+\lambda}$ be such that
$$
v_0(x)\le \min\{d e^{-kx\cdot\xi},\, d e^{kx\cdot\xi}\} \quad \forall\,\ x\in\R^N.
$$
By the similar arguments as those in Theorem \ref{spreading-thm} (2), we can prove that
$$
u(x,t;u_0,v_0)\leq M e^{-k(x\cdot\xi-ct)}\quad \forall\, x\in\R^N,\,\ t>0,
$$

$$
u(x,t;u_0,v_0)\leq M e^{k(x\cdot\xi+ct)}\quad \forall\, x\in\R^N,\,\ t>0,
$$

$$
v(x,t;u_0,v_0)\leq d e^{-k(x\cdot\xi-ct)}\quad \forall\, x\in\R^N,\,\ t>0,
$$
and
$$
v(x,t;u_0,v_0)\leq d e^{k(x\cdot\xi+ct)}\quad \forall\, x\in\R^N,\,\ t>0.
$$
It then follows that
$$
u(x,t;u_0,v_0)\leq M e^{-k(|x\cdot\xi|-ct)}\quad \forall\, x\in\R^N,\,\ t>0,
$$
and
$$
v(x,t;u_0,v_0)\leq d e^{-k(|x\cdot\xi|-ct)}\quad \forall\, x\in\R^N,\,\ t>0.
$$
For any $\varepsilon>0$, there exists $0<k<\sqrt{a}$ such that $2\sqrt{a}+\varepsilon>c$, Theorem \ref{spreading-thm-2} (2) thus follows.
\end{proof}


\end{document}